\pgfplotsset{width=10cm,compat=1.9}
\newsavebox{\tempbox}
\theoremstyle{remark}
\newtheorem{remark}{Remark}
\theoremstyle{plain}
\newtheorem{lemma}{Lemma}
\theoremstyle{plain}
\newtheorem{theorem}{Theorem}
\theoremstyle{plain}
\def \bb{{\bf b}}
\newcommand{\bu}{\bar{u}}
\newcommand{\bt}{\boldsymbol{t}}
\newcommand{\cin}{C_{\mathrm{inv}}}
\title{Analysis of Flux Corrected Transport Schemes for Evolutionary Convection-Diffusion-Reaction Equations}
\author{Abhinav Jha
\footnote{Corresponding Author, RWTH Aachen University, Applied and Computational Mathematics, Schinkelstra\ss e 2, 52062, Aachen, Germany \texttt{jha@acom.rwth-aachen.de}}, Naveed Ahmed\footnote{Gulf University for Science \& Technology,Block 5, Building 1,Mubarak Al-Abdullah Area, West Mishref Kuwait, \texttt{ahmed.n@gust.edu.kw}}}
\date{}
\begin{document}
\maketitle
\begin{abstract}
We report in this paper the analysis for the linear and nonlinear version of the flux corrected transport (FEM-FCT) schemes in combination with the backward Euler time-stepping scheme applied to a time-dependent convection-diffusion-reaction problems. We present the stability and error estimates for the linear and nonlinear FEM-FCT scheme. Numerical results confirm the theoretical predictions.

\textbf{Keywords:}
evolutionary convection-diffusion-reaction equations; finite element method flux corrected transport; finite element error analysis

\textbf{AMS subject classifications:} 65N12, 65N30
\end{abstract}

\section{Introduction}
The time-dependent convection diffusion reaction equations are used to model many physical processes arising in computational fluid dynamics. When convection dominates the diffusion, we have the presence of layers in the interior and the boundary of the domain. 

To overcome the difficulty of instabilities, stabilization schemes are applied. 
One of the most popular methods is the Streamline Upwind Petrov-Galerkin (SUPG) method which was introduced in \cite{HB79, BH81} for the steady state problems.  However, the drawback of the SUPG method for time-dependent problem is the fact that for ensuring the strong consistency the time derivative, the second order derivative and the source term have to be included into the stabilization term. The SUPG method in space combined with different time discretization was investigated in~\cite{Bur10}. In~\cite{JN11}, the SUPG method is combined with the backward Euler and Crank-Nicolson methods. It turns out that the stabilization vanishes if the time step length approaches zero. In the case of time independent convection and reaction coefficients, error estimates are derived which allow the stabilization parameters to be chosen similar to the steady-state case.

Alternative to SUPG are the symmetric stabilization schemes such as the local projection stabilization (LPS)~\cite{AMTX11,BB04, BB06, AM15}, the continuous interior penalty method (CIP)~\cite{BF09}, the subgrid scale modeling (SGS) \cite{Gue99, Lay02}. A comparison of the SUPG method with the LPS method can be found in~\cite{AM15, AJ15}.

Another stabilization approach is the so-called algebraic schemes introduced in \cite{Zal79} and was combined with finite element discretization in \cite{PC86} which works on an algebraic level rather than a variational level. In literature, these schemes are referred to as flux-corrected transport (FEM-FCT). Some of the prominent work in this direction has been done by Kuzmin and co-authors and can be found in \cite{Ku06, Ku07, LKSM17, Loh19}. It has been noted in \cite{JR10} that the FEM-FCT schemes performed better than the SUPG schemes. A comparison for different stabilization schemes and discretization can be found in \cite{ACFFJL10}.
 

This technique's steady-state counterpart is referred to as the algebraic flux correction (AFC) schemes introduced in \cite{Ku07}. The numerical analysis of the AFC schemes has been developed recently in \cite{BJK16} and a modification to the limiter definition is presented in \cite{BJK17}. This paper provided a new insight for the AFC schemes as this was the first finite element error analysis for the schemes. The stabilization parameters are referred to as limiters, and a comprehensive comparison of the results based on different limiters is presented in \cite{BJKR18}. One of the drawbacks of the AFC schemes is their nonlinear nature. A study on the solvers for these schemes can be found in \cite{JJ19, JJ18}. This nonlinearity also arises in the FEM-FCT schemes, and a study for different types of solvers is presented in \cite{JN12}. The FCT schemes have been successfully applicable only to lower-order elements and we consider the same in our analysis.

As mentioned above, the FEM-FCT schemes are nonlinear in nature when combined with a $\theta-$scheme, but one can also linearize the scheme using the solution at the previous time step. A comparison of the linear and the nonlinear schemes are present in \cite{JS08} where it is shown that the linear FEM-FCT has a better ratio of accuracy and efficiency. The solvability of the nonlinear FEM-FCT schemes has been presented recently in \cite{JKK21} and the existence and uniqueness of the solution are shown in \cite{JK21} where the analysis from \cite{BJK16} is extended. This paper presents the finite element error analysis of the FEM-FCT schemes with backward Euler as the time-stepping. To the best of our knowledge, this is the first work in this direction. The positivity of the solution has already been discussed in \cite{Ku09} where the positivity is independent of the choice of limiters'. Here we will be considering both the linear and the nonlinear version of the FEM-FCT schemes mentioned in \cite{Ku09}. We assume the limiter's general properties in order to obtain optimal order of convergence on the time and space in the $L^2$ and the $H^1$ norm. The analysis is performed in the system's natural norm, which we refer to as the FCT norm. 

The work's main findings are the CFL-like stability condition, the optimal convergence rate for the $L^2$ and the $H^1$ norm, and the sub-optimal convergence rate for the FCT norm. Numerical simulations verify the analytical findings. The optimal convergence rate in the $L^2$ and the $H^1$ norm and the sub-optimal convergence rate in the FCT norm are proved.

The structure of the paper is as follows: In Sec.~\ref{sec:model_problem} we introduce the FEM-FCT scheme and give an example of the limiter that will be used in the simulations. In Sec.~\ref{sec:stability} we provide a variational formulation of the scheme and prove the stability of the linear as well as the nonlinear version of the scheme in the FCT norm. In Sec.~\ref{sec:error_estimates} we prove the finite element error analysis of the scheme using standard interpolation estimates. Next, in Sec.~\ref{sec:numerics} we provide the numerical simulations of the scheme on three different types of the grid. Lastly, in Sec.~\ref{sec:summary} we present a summary and provide an outlook. 

\section{Preliminaries}\label{sec:model_problem}
Let \(\Omega\in\mathbb{R}^d,\; d\in\{2,3\}\), be a bounded domain with boundary \(\partial\Omega\) and \([0,T]\) be a bounded time interval. Consider the evolutionary convection-diffusion-reaction equation: Find \(u:(0,T] \times \Omega\rightarrow \mathbb{R}\) such that
\begin{equation}\label{eq:time_cdr_eqn}
\begin{array}{rcll}
u'-\varepsilon \Delta u+\bb\cdot \nabla u+cu&=&f \quad &\mbox{in } (0,T]\times \Omega,\\
u &=&0 & \mbox{in } (0,T] \times \partial\Omega,\\
u(0,x) & = & u_0  & \mbox{in } \Omega.
\end{array}
\end{equation}
Here \(0<\varepsilon\ll1\) is a diffusivity parameter, \(\bb(t,x)\) is the convection field, \(c(t,x)\) is the reaction coefficient, \(f(t,x)\) is the given outer source of unknown quantity \(u\), and \(u_0(x)\) is the initial condition. The use of homogeneous Dirichlet boundary conditions in the system \eqref{eq:time_cdr_eqn} is just for  simplicity of presentation. Furthermore, it is assumed that there exists a positive constant \(c_0\) such that
\begin{equation}\label{eq:asmpt}
  c(t,x) -\frac12 \nabla \cdot \bb(t,x) \ge c_0 >0 
\end{equation}
which guarantees the unique solvability of Eq.~\eqref{eq:time_cdr_eqn} (see \cite{RST08}).

This paper's main topic is the analysis of the FEM-FCT schemes described in \cite{Ku09}. To keep the paper self-contained, a short presentation of these schemes will be given here. Consider a spatial discretization of Eq.~\eqref{eq:time_cdr_eqn} using the FEM with piecewise linear elements. For temporal discretization, a backward Euler scheme is used. These schemes, in algebraic form, lead at the discrete-time \(t_n\) to an equation of the form
\begin{equation}\label{eq:fd_high_ord}
M_Cu^n+\tau Au^n =\tau f^n  + M_Cu^{n-1},
\end{equation}
where \(\tau \) is the time step length. The matrix \(M_C\) is the consistent mass matrix, and the stiffness matrix $A$ is the sum of diffusion, convection, and reaction. Furthermore, the notations $u^n,\;f^n$ stand for the vectors unknown coefficients of the finite element method. 

In order to satisfy the maximum principle for the discrete problem, the system matrix should be an  \(M\)-matrix. The sufficient condition for a matrix to be an \(M\)-matrix is that all the diagonal entries are positive, all off-diagonal entries are non-positive, and the row sum is positive \cite[Theorem~2]{Cia70}. To achieve this, we modify the left hand side of  Eq.~\eqref{eq:fd_high_ord} such that the system matrix corresponds to an \(M\)-matrix. Let us define the artificial diffusion matrix \(D\) such that
\begin{equation}\label{eq:matD}
d_{ij}=-\mathrm{max}\{a_{ij},0,a_{ji}\} \quad \mbox{for } i\neq j,\qquad  d_{ii}=-\sum_{j=1,j\neq i}^Nd_{ij},
\end{equation}
where \(N\) is the number of degrees of freedom and lumped mass matrix \(M_L\) 
\begin{equation}\label{eq:matML}
M_L=\mathrm{diag}\left(m_i\right), \quad m_i=\sum_{j=1}^Nm_{ij}.
\end{equation}
Then, the matrix $\mathbb{A}=A+D$ and $M_L$ satisfies the condition for a \(M\)-matrix and the following scheme is a stable low-order scheme
\begin{equation}\label{eq:fd_low_order}
M_Lu^n+\tau \mathbb{A}u^n =\tau f^n +M_Lu^{n-1}+f^*(u^n,u^{n-1}).
\end{equation}
In the next step, one needs to define an appropriate ansatz for $f^*(u^n,u^{n-1})$. To this end, subtracting Eq.~\eqref{eq:fd_high_ord} from Eq.~\eqref{eq:fd_low_order} one get the residual vector
\[
r=\left( M_L-M_C\right)u^n+(\mathbb{A}-A)u^n\tau -(M_L-M_C)u^{n-1}.
\]
The idea is to limit these modifications by introducing solution dependent limiters $\alpha_{ij}$ such that
\[f^*_i(u^n,u^{n-1})=\sum_{j=1}^N\alpha_{ij}f	_{ij}\qquad i=1,\ldots,N.\]

The basic idea of FEM-FCT is to find these weights in such a way that they are close to \(1\) in smoother regions (i.e., we recover the Galerkin FEM) and close to \(0\) in the vicinity of layers (to recover the low-order scheme). The contribution of $f^*_i(u^n,u^{n-1})$ stems from a decomposition of the residual vector
\[
r_i=\sum_{j=1}^Nf_{ij}=\sum_{j=1}^Nm_{ij}(u_i^n-u_j^n)-\sum_{j=1}^Nm_{ij}(u_i^{n-1}-u_j^{n-1})+\sum_{j=1}^Nd_{ij}(u_j^n-u_i^n)\tau \]
for \(i=1,\ldots,N\). The above representation is derived from the definition of $D$ and \(M_L\). The number \(f_{ij}\) are referred as fluxes.

In order to have a conservative scheme, the limiters \(\alpha_{ij}\) have to be symmetric i.e.,
\begin{equation}\label{eq:limiter_symmetry}
\alpha_{ij}=\alpha_{ji},
\end{equation}
Now, we can rewrite Eq.~\eqref{eq:fd_low_order} in an algebraic form
\begin{eqnarray}\label{eq:fem_fct}
\lefteqn{\sum_{j=1}^N m_{ij}(u_j^n-u_j^{n-1}) +  \tau \sum_{j=1}^Na_{ij}u_j^n} \nonumber \\
&& +\sum_{j=1}^Nm_{ij}(1-\alpha_{ij})\left[(u_i^n-u_i^{n-1})-(u_j^n-u_j^{n-1})\right]\nonumber\\
&& +\tau \sum_{j=1}^Nd_{ij}(1-\alpha_{ij})(u_j^n-u_i^n)=f_i^n\tau \quad \mathrm{for }\ i=1,\ldots,N,
\end{eqnarray}
where $\alpha_{ij}\in [0,1]$, $i,j=1,\ldots,N$ satisfy Eq.~\eqref{eq:limiter_symmetry}.

The scheme Eq.~\eqref{eq:fem_fct} can be handled in two different ways. The nonlinear version of the FEM-FCT scheme utilizes an explicit solution \(\bar{u}\) with the forward Euler scheme at \(t_n-\tau /2\)
\begin{equation}\label{eq:forward_euler_approximation}
\bar{u}=u^{n-1}-\frac{\tau }{2}M_L^{-1}\left( \mathbb{A}u^{n-1}-f^{n-1}\right).
\end{equation}
Here \(\bar{u}\) will be used as a prelimiting of the fluxes for the nonlinear scheme.
If $f_{ij}(\bar{u}_i-\bar{u}_j)<0$ then set $f_{ij}=0$, which is prescribed in~\cite{KM05, Ku09}.

The linear FEM-FCT is a special case presented in~\cite{Ku09}. Consider the vector $u^n$ in the flux $f_{ij}$ which is replaced by an approximation obtained with an explicit scheme. Using $u^{n-1/2}=(u^n+u^{n-1})/2$ in the definition of $f_{ij}$ leads to
\begin{eqnarray*}
f_{ij} & = &2m_{ij}(u_i^{n-1/2}-u_i^{n-1})-2m_{ij}(u_j^{n-1/2}-u_j^{n-1})\nonumber \\
&& + 2\tau d_{ij}(u_j^{n-1/2}-u_i^{n-1/2})+\tau d_{ij}(u_i^{n-1}-u_j^{n-1}).\nonumber
\end{eqnarray*}

We can approximate $u^{n-1/2}$ by the forward Euler method in the same way as the pre-limiting in the nonlinear scheme. Inserting Eq.~\eqref{eq:forward_euler_approximation} leads to
\begin{equation*}
f_{ij}=\tau  m_{ij}(\nu_i^{n-1/2}-\nu_j^{n-1/2}) + \tau  d_{ij}\left[u_j^{n-1}-u_i^{n-1}+\tau  (\nu_j^{n-1/2}-\nu_i^{n-1/2})\right]
\end{equation*}
where $\nu_i^{n-1/2}=\left(M_L^{-1}(f_{n-1}-\mathbb{A}u^{n-1}\right)_i$.

Note that both methods use an explicit method as predictor, which results in a CFL condition for these methods, for details see \cite{KM05, Ku09}.
\subsection{Limiters}\label{sec:limiters}
In this section we give an example of limiter that will be used in the simulations. We follow the defintion of the Zalesak algorithm presented in \cite{JN12}.
\begin{enumerate}
    \item Compute 
    \begin{equation*}
       P_i^+ = \sum_{j=1,j\neq i}^N f_{ij}^+ \qquad P_i^- = \sum_{j=1,j\neq i}^N f_{ij}^-.
    \end{equation*}
    \item Compute 
    \begin{eqnarray}
       Q_i^+ & = &\max\left\{ 0,\max_{i=1,\ldots,N,j\neq i}(\bar{u}_j-\bar{u}_i) \right\},\nonumber \\
       Q_i^- & = &\min\left\{ 0,\min_{i=1,\ldots,N,j\neq i}(\bar{u}_j-\bar{u}_i) \right\}.\nonumber
    \end{eqnarray}
    \item Compute
    \begin{equation*}
       R_i^+ = \min\left\{ 1,\frac{m_iQ_i^+}{P_i^+} \right\}, \qquad R_i^+ = \min\left\{ 1,\frac{m_iQ_i^-}{P_i^-} \right\}.
    \end{equation*}
    If the \(P_i^+\) or \(P_i^-\) is zero, we set \(R_i^+=1\) or \(R_i^-=1\), respectively. 
    \item Compute
    \[ \alpha_{ij} = \begin{cases} 
        \min\{R_i^+,R_j^-\} & \mbox{ if } f_{ij}>0\\
        \min\{R_i^-,R_j^+\} & \mbox{ otherwise},
      \end{cases}
   \]
\end{enumerate}
where $f_{ij}^+=\max \lbrace f_{ij},0\rbrace$ and $f_{ij}^+=\min \lbrace f_{ij},0\rbrace$.

\section{Stability of the FEM-FCT Methods}\label{sec:stability}
This section studies the stability of the fully discrete version of the linear and nonlinear FEM-FCT schemes. To this end, we first write  Eq.~\eqref{eq:time_cdr_eqn} into a variational form, and then using the finite element discretization, we mention the variational formulation of the nonlinear algebraic problem. 

Let \(V\subset H^1_0(\Omega)\). A variational formulation of Eq.~\eqref{eq:time_cdr_eqn} reads: Find \(u:(0,T]\rightarrow V\)  with such that
\begin{equation}\label{eq:wf}
(u',v)+\varepsilon(\nabla u,\nabla v)+(\bb\cdot \nabla u+cu,v)=(f,v)\qquad \forall\ v\in V.
\end{equation}
Here, \((\cdot,\cdot)\) denotes the inner product in \(L^2(\Omega)^d\). For the finite element discretization of Eq.~\eqref{eq:wf}, \(V\) is replaced by the finite-dimensional space \(V_h\), where \(h\) represents the mesh size of the underlying triangulation \(\{\mathcal{T}_h\}\) of \(\Omega\). 
We consider in this paper the conforming finite element method and, therefore, \(V_h\subset V\). The numerical analysis of the AFC schemes has been derived for first-order elements on triangles, i.e.,  we choose $V_h=\mathbb{P}_1$.

The variational formulation for the nonlinear FEM-FCT scheme reads: Find \(u_h:(0,T]\rightarrow V_h\) with \(u_h(0) = u_{h,0}\) such that
\begin{equation}\label{eq:variational_non_linear_fem_fct}
(u'_h,v_h)+a_h(u_h,v_h)+d_h^D(u_h;u_h,v_h)+d_h^M(u_h;u_h,v_h)=(f,v_h)\qquad\forall\ v_h\in V_h,
\end{equation}
where  
\begin{eqnarray} \label{eq:nl_forms}
a_h(u_h,v_h)& = & (\varepsilon\nabla u_h,\nabla v_h)+(\bb\cdot \nabla u_h, v_h)+(cu_h,v_h),\nonumber \\
d_h^D(u_h;u_h,v_h)& = &\sum_{i,j=1}^N(1-\alpha_{ij})d_{ij}(u_{hj}-u_{hi})v_{hi},
\nonumber\\
d_h^M(u_h;u_h,v_h)& = & \sum_{i,j=1}^N(1-\alpha_{ij})m_{ij}\left( u_{hi}'- u_{hj}'\right) v_{hi},
\end{eqnarray}
and $u'_h$ represents the time derivative of $u$.

Note that \(u_{h,0}\in V_h \) is a suitable approximation of \(u_0\) in the finite dimensional space \(V_h\). 

The backward Euler scheme as the temporal discretization of Eq.~\eqref{eq:variational_non_linear_fem_fct} lead at the discrete-time \(t_n\) to an
equation of the form
\begin{eqnarray}\label{eq:fdp_nl}
\lefteqn{\left(u_h^n,v_h\right)+\tau  \left[a_h(u_h^n,v_h)+d_h^D(u_h^n;u_h^n,v_h)+d_h^M(u_h^n;u_h,v_h)\right]} \nonumber \\
&= & \tau (f^n,v_h)+\left(u_h^{n-1},v_h\right)\qquad\forall\ v_h\in V_h.
\end{eqnarray}
Similarly, the discrete version of the linear FEM-FCT scheme after the temporal discretization leads to the fully discrete problem
\begin{eqnarray}\label{eq:fdp_lp}
\lefteqn{(u_h^n,v_h)+\tau  \left[a_h(u_h^n,v_h)+\bar{d}_h^D(u_h^n,v_h)
+\bar{d}_h^M(u_h^n,v_h)\right]}\nonumber \\
& = &\tau  (f^n,v_h)+ \tau (f^*,v_h)+\left(u_h^{n-1},v_h\right)\qquad \forall\ v_h\in V_h,
\end{eqnarray}
where
\begin{eqnarray*}
\bar{d}_h^D(u_h,v_h)& = &\sum_{i,j=1}^Nd_{ij}(u_{hj}-u_{hi})v_{hi},\nonumber \\
\bar{d}_h^M(u_h,v_h)& = &\sum_{i,j=1}^Nm_{ij}\left( u_{hi}' -u_{hj}'\right) v_{hi},\nonumber \\
(f^*,v_h)& = &\sum_{i,j=1}^N\alpha_{ij}\left[ (\tau d_{ij}-m_{ij})(\nu_{hj}^{n-1/2}-\nu_{hi}^{n-1/2})\right]v_{hi}\nonumber \\
&& +\sum_{i,j=1}^Nd_{ij}\alpha_{ij}(u_{hj}^{n-1}-u_{hi}^{n-1})v_{hi}.\nonumber
\end{eqnarray*}
For the analysis, some preliminaries are introduced. Assuming that the meshes are quasi-uniform, the following inverse inequality (see \cite[Lemma~4.5.3]{BS08}) holds for each $v_h\in V_h$
\begin{equation}\label{eq:inverse_inequality}
\|v_h\|_{W^{m,q}(K)}\leq C_{\mathrm{inv}}h^{l-m-d(1/q'-1/q)}\|v_h\|_{W^{l,q'}(K)},
\end{equation}
where $0\leq l \leq m\leq 1$, $1\leq q'\leq q\leq \infty$, and $\|\cdot \|_{W^{m,q}(K)}$ is the norm in \(W^{m,q}(K)\). 
The norm and the semi-norm in \(W^{m,q}\) are given by \(\|\cdot\|_{m,q}\) and \(|\cdot|_{m,q}\), respectively. In case \(q=2\), we write \(H^m(\Omega)\), \(\|\cdot\|_{m}\), and \(|\cdot|_{m}\) instead of \(W^{m,q}(K)\), \(\|\cdot\|_{m,q}\), and \(|\cdot|_{m,q}\), respectively.

Consider $v\in H^1(K)$ and $E\subset \partial K$, then the following trace inequality holds (see \cite[Lemma~3.1]{Ver98})
\begin{equation}\label{eq:trace_inequality}
\|v\|_{0,E}\leq C_{\mathrm{T_1}}h_K^{-1/2}\|v\|_{0,K}+C_{\mathrm{T_2}}h_K^{1/2}\|\nabla v\|_{0,K},
\end{equation}
where $C_{\mathrm{T_1}}$ and $C_{\mathrm{T_2}}$ are constants independent of $h$.

Let $m_{ij}$ be an element of the mass matrix $M_C$, then the estimate
\begin{equation}\label{eq:mij}
|m_{ij}|\leq C_{m_{ij}}h^d,
\end{equation}
holds true, where $C_{m_{ij}}$ is independent of \(h\).
\begin{lemma} \emph{(\cite[Lemma~5.23]{Jha20})}\label{lem:tang_derivative}
Let E be an edge with length $h_E$ and $v$ be a linear function on E, then
\begin{equation}\label{eq:lemma_1}
\|\nabla v\cdot \bt_E\|_{0,E}^2\leq \|\nabla v\|_{0,E}^2,
\end{equation}
where $\bt_E$ is the tangent unit vector to E.
\end{lemma}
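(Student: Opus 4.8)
The plan is to reduce \eqref{eq:lemma_1} to an elementary pointwise inequality between a vector and its orthogonal projection onto a line. Since $v$ is linear on $E$, its gradient $\nabla v$ is a constant vector, so both sides of \eqref{eq:lemma_1} equal a constant integrand times the edge length $h_E$; in particular it suffices to bound the (constant) integrand. The essential feature to exploit is that $\bt_E$ is a \emph{unit} tangent vector, and projecting onto a unit direction cannot increase Euclidean length.

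The key step is therefore to invoke the Cauchy--Schwarz inequality at every point of $E$,
\[
|\nabla v \cdot \bt_E| \le |\nabla v|\,|\bt_E| = |\nabla v|.
\]
Equivalently, and perhaps more transparently, I would complete $\bt_E$ to an orthonormal basis of $\mathbb{R}^d$ (in two dimensions together with the unit normal $\bn_E$), decompose
\[
\nabla v = (\nabla v \cdot \bt_E)\,\bt_E + (\nabla v \cdot \bn_E)\,\bn_E,
\]
and use orthonormality to write $|\nabla v|^2 = |\nabla v \cdot \bt_E|^2 + |\nabla v \cdot \bn_E|^2 \ge |\nabla v \cdot \bt_E|^2$. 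Either route yields the pointwise bound $|\nabla v \cdot \bt_E|^2 \le |\nabla v|^2$.

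Finally, I would integrate this pointwise inequality over $E$ to obtain
\[
\|\nabla v \cdot \bt_E\|_{0,E}^2 = \int_E |\nabla v \cdot \bt_E|^2 \, \mathrm{d}s \le \int_E |\nabla v|^2 \, \mathrm{d}s = \|\nabla v\|_{0,E}^2,
\]
which is exactly \eqref{eq:lemma_1}.

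There is no genuine obstacle here, as the result is a direct consequence of the contractivity of orthogonal projection. The only points requiring mild care are to keep the argument dimension-independent, since in three dimensions the orthogonal complement of $\bt_E$ is two-dimensional but the decomposition argument is otherwise unchanged, and to recognize that the hypothesis that $v$ is linear is not strictly needed for the inequality itself; it merely reflects the piecewise-linear finite element setting in which the lemma is applied and renders the integrand constant, so that the estimate is in fact an identity up to the projection defect $|\nabla v \cdot \bn_E|^2$.
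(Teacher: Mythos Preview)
Your proof is correct. The paper does not actually supply a proof of this lemma; it merely states the result and cites \cite[Lemma~5.23]{Jha20}. Your argument via the pointwise Cauchy--Schwarz inequality (or equivalently the orthogonal decomposition of $\nabla v$) is the natural one, and your observation that linearity of $v$ is not essential for the inequality itself is accurate.
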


The next lemma states the coercivity of FEM-FCT scheme and the proof can be found in~\cite{BJK16}.
\begin{lemma}
Let Eq.~\eqref{eq:asmpt} be satisfied. Then the bilinear form \[a_{\mathrm{FCT}}(u,v)=a_h(u,v)+d_h(u;u,v)\] 
associated with the FEM-FCT scheme is coercive with respect to the \(\|\cdot\|_{\mathrm{FCT}}\) norm, i.e.,
\begin{align}\label{eq:coercive}
a_{\mathrm{FCT}}(u,u)\geq C_{\mathrm{FCT}}\|u\|_{\mathrm{FCT}}^2,
\end{align}
where $C_{\mathrm{FCT}}$ is the coercive constant and the \(\|\cdot\|_{\mathrm{FCT}}\) norm is defined by
\[
\|u_h\|_{\mathrm{FCT}}:=\left(\varepsilon|u_h|_{1}^2+c_0\|u_h\|_{0}^2+d_h(u_h;u_h,u_h)\right)^{1/2}.
\]
\end{lemma}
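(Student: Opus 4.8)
The plan is to expand $a_{\mathrm{FCT}}(u,u)$ term by term and to exploit the fact that the artificial--diffusion contribution $d_h(u;u,u)$ (here $d_h$ denotes the form $d_h^D$, since the stationary bilinear form carries no time derivative) appears verbatim both in $a_{\mathrm{FCT}}(u,u)$ and in the definition of $\|u\|_{\mathrm{FCT}}^2$, so that it will cancel in the difference $a_{\mathrm{FCT}}(u,u)-\|u\|_{\mathrm{FCT}}^2$. Unfolding the definitions in~\eqref{eq:nl_forms},
\[
a_{\mathrm{FCT}}(u,u)=\varepsilon|u|_1^2+(\bb\cdot\nabla u,u)+(cu,u)+d_h(u;u,u),
\]
so the whole problem reduces to controlling the convection and reaction terms from below by $c_0\|u\|_0^2$.

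For that reduction I would integrate the convection term by parts. Since $u\in V_h\subset H_0^1(\Omega)$ vanishes on $\partial\Omega$, the boundary contribution drops out and
\[
(\bb\cdot\nabla u,u)=\int_\Omega\bb\cdot\nabla\Bigl(\tfrac12 u^2\Bigr)\,dx=-\tfrac12\bigl((\nabla\cdot\bb)u,u\bigr).
\]
Adding the reaction term and invoking the structural assumption~\eqref{eq:asmpt} then gives the desired lower bound,
\[
(\bb\cdot\nabla u,u)+(cu,u)=\int_\Omega\Bigl(c-\tfrac12\nabla\cdot\bb\Bigr)u^2\,dx\ge c_0\|u\|_0^2.
\]

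It remains to confirm that $\|\cdot\|_{\mathrm{FCT}}$ is genuinely a norm, i.e. that $d_h(u;u,u)\ge0$, which I would establish by symmetrisation. Both $d_{ij}=-\mathrm{max}\{a_{ij},0,a_{ji}\}$ and $\alpha_{ij}$ (by~\eqref{eq:limiter_symmetry}) are symmetric, hence $w_{ij}:=(1-\alpha_{ij})d_{ij}$ is symmetric; averaging the double sum with its $i\leftrightarrow j$ relabelling yields
\[
d_h(u;u,u)=\sum_{i,j=1}^N w_{ij}(u_j-u_i)u_i=-\tfrac12\sum_{i\neq j}(1-\alpha_{ij})d_{ij}(u_i-u_j)^2\ge0,
\]
because $d_{ij}\le0$ for $i\neq j$ and $1-\alpha_{ij}\ge0$ as $\alpha_{ij}\in[0,1]$. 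Combining the three displays, the diffusion and artificial-diffusion terms match the norm exactly and the convection-reaction part dominates $c_0\|u\|_0^2$, so that
\[
a_{\mathrm{FCT}}(u,u)\ge\varepsilon|u|_1^2+c_0\|u\|_0^2+d_h(u;u,u)=\|u\|_{\mathrm{FCT}}^2,
\]
which is the assertion with $C_{\mathrm{FCT}}=1$.

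The argument is essentially self-coercive by the very construction of the FCT norm, so I do not anticipate a deep obstacle. The only points demanding genuine care are the sign bookkeeping in the symmetrisation of $d_h$, where the symmetry of the diffusion coefficients $d_{ij}$ and of the limiters $\alpha_{ij}$ must be used \emph{simultaneously} to produce the square $(u_i-u_j)^2$, and the verification that the boundary term in the integration by parts truly vanishes, which relies on the conforming choice $V_h\subset H_0^1(\Omega)$ together with enough regularity of $\bb$ to render $\nabla\cdot\bb$ meaningful.
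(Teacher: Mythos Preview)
Your argument is correct and is precisely the standard coercivity proof for AFC-type schemes: integration by parts on the convection term together with assumption~\eqref{eq:asmpt} yields the $c_0\|u\|_0^2$ contribution, while the symmetrisation of $d_h^D$ (using $d_{ij}=d_{ji}\le 0$ for $i\neq j$ and $\alpha_{ij}=\alpha_{ji}\in[0,1]$) gives nonnegativity of the stabilisation term, so that $a_{\mathrm{FCT}}(u,u)\ge\|u\|_{\mathrm{FCT}}^2$ with $C_{\mathrm{FCT}}=1$. The paper does not prove this lemma itself but defers to \cite{BJK16}; your proof is exactly the argument given there (their Lemma~1 for $d_h^D\ge 0$ combined with the usual Galerkin coercivity), so there is nothing to add.
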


For linear elements \(u,v\in V_h\), one can represent \(d_h^D (\cdot;\cdot,\cdot)\) through an edge formulation (see \cite[Eq.~(16)]{BJKR18})
\begin{equation}\label{eq:d_h_D_edge_formulation}
d_h^D(w;u,v)=\sum_{E\in \mathcal{E}_h}(1-\alpha_E(w))|d_E|h_E(\nabla u\cdot \bt_E,\ \nabla v\cdot \bt_E)_E,
\end{equation}
where $\mathcal{E}_h$ is the set of all edges, $d_E$ denotes $d_{ij}$, and $\alpha_E$ denotes $\alpha_{ij}$ along the edge $E$ with endpoints $x_i$ and $x_j$.

In the same way, we can represent \(d_h^M (\cdot;\cdot,\cdot)\) also through an edge formulation
\begin{equation}\label{eq:d_h_edge_formulation}
d_h^M(w;u,v)=\sum_{E\in \mathcal{E}_h}(1-\alpha_E(w))|m_E|h_E(\nabla u'\cdot \bt_E,\ \nabla v\cdot \bt_E)_E,
\end{equation}
where $m_E$ denotes $m_{ij}$. Similarly we can represent \(\underline{d}_h^M (\cdot,\cdot)\) (and \(\underline{d}_h^D (\cdot,\cdot)\)) in an edge formulation.
We note that we have a predictor-corrector scheme for FEM-FCT algorithm and hence, we need to prove stability estimates for both the predictor as well as the corrector step.

The forward Euler (FE) scheme at time $t_{n+1}-(\tau)/2$ gives
$$
\bu^n=u^{n-1}-\frac{\tau}{2}M_L^{-1}(\mathbb{A}u^{n-1}-f_{n-1}).
$$
Our variational formulation for the FE scheme looks like: Find $\bu_h^n\in V_h$ such that

\begin{equation}\label{eq:var_form_fe}
\left(u^{\mathrm{FE}}, v\right)+a(u_h^{n-1},v)+\bar{d}_h^D(u_h^{n-1},v)+\check{d}_h^M(u^{\mathrm{FE}},v)=(f_{n-1},v),
\end{equation}
where
\begin{eqnarray*}
u^{\mathrm{FE}}& = & 2\left(\frac{\bu_h^n-u_h^{n-1}}{\tau}\right)\qquad \mathrm{and,}\nonumber \\
\check{d}_h^M(u^{\mathrm{FE}},v)& = &\frac{2}{\tau}\sum_{i,j=1}^Nm_{ij}\left(\bu^i_n-\bu^j_n-u^i_{n-1}+u^j_{n-1}\right)v^i.
\end{eqnarray*}
\begin{theorem}\label{thm:stab_fe_fem_fct} 
Let Eq.~\eqref{eq:asmpt} hold. With the additional condition
\begin{equation}\label{eq:assumption_space_time}
\tau \leq Ch^2,
\end{equation}
then the solution \(\bu_h^n\) of Eq.~\eqref{eq:var_form_fe} satisfies at $t_n=n\tau $
\begin{eqnarray}\label{eq:stability_FEM_FCT_fe}
\lefteqn{\|\bu_h^n\|_0^2+C\tau \sum_{m=1}^n\|u_h^m\|_{\mathrm{FCT}}^2}\nonumber \\
&\leq &\|\bu_h^0\|_0^2+\sum_{E\in \mathcal{E}_h}|m_E|h_E\|\nabla u_h^{0}\cdot \bt_E\|_{0,E}^2+ C\tau\sum_{m =1}^n\|f_m\|_0^2,
\end{eqnarray}
where \(C\) is a constant that do not depend on \(h\) and \(\tau \).
\end{theorem}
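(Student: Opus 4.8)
The plan is to run a discrete energy estimate directly on the predictor equation~\eqref{eq:var_form_fe}. Because the stiffness and artificial-diffusion terms are evaluated at the old level $u_h^{n-1}$, the natural test function is built from $u_h^{n-1}$ and $\bu_h^n$. First I would test with $v=u_h^{n-1}$ and rewrite the mass contribution through the polarization identity
\[
(u^{\mathrm{FE}},u_h^{n-1})=\frac{1}{\tau}\Big(\|\bu_h^n\|_0^2-\|u_h^{n-1}\|_0^2-\|\bu_h^n-u_h^{n-1}\|_0^2\Big),
\]
which isolates the evolving $L^2$-norm together with the explicit forward-Euler defect $-\tfrac1\tau\|\bu_h^n-u_h^{n-1}\|_0^2$. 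On the same test function the term $a_h(u_h^{n-1},u_h^{n-1})+\bar d_h^D(u_h^{n-1},u_h^{n-1})$ must be bounded below: here I would observe that the fully weighted artificial diffusion $\bar d_h^D$ dominates the $(1-\alpha)$-weighted form entering the FCT norm, so that by the coercivity estimate~\eqref{eq:coercive} this contribution is at least $C_{\mathrm{FCT}}\|u_h^{n-1}\|_{\mathrm{FCT}}^2$.

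The heart of the argument, and the step I expect to be the main obstacle, is to control the explicit defect $\tfrac1\tau\|\bu_h^n-u_h^{n-1}\|_0^2=\tfrac{\tau}{4}\|u^{\mathrm{FE}}\|_0^2$ and to show it is absorbed by the coercive term once~\eqref{eq:assumption_space_time} is in force. To bound $\|u^{\mathrm{FE}}\|_0$ I would test~\eqref{eq:var_form_fe} a second time, now with $v=u^{\mathrm{FE}}$; the mass parts $(u^{\mathrm{FE}},u^{\mathrm{FE}})+\check d_h^M(u^{\mathrm{FE}},u^{\mathrm{FE}})$ are nonnegative, so $\|u^{\mathrm{FE}}\|_0^2$ is controlled by the right-hand side $(f_{n-1},u^{\mathrm{FE}})-a_h(u_h^{n-1},u^{\mathrm{FE}})-\bar d_h^D(u_h^{n-1},u^{\mathrm{FE}})$. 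The diffusive and convective parts of $a_h$ together with the edge form~\eqref{eq:d_h_D_edge_formulation} of $\bar d_h^D$ are estimated by shifting the derivatives off $u^{\mathrm{FE}}$ via the inverse inequality~\eqref{eq:inverse_inequality}, the trace inequality~\eqref{eq:trace_inequality}, Lemma~\ref{lem:tang_derivative} and the mass bound~\eqref{eq:mij}; this produces negative powers of $h$ in front of $\|u_h^{n-1}\|_{\mathrm{FCT}}^2$ (and a harmless $\|f_{n-1}\|_0^2$ term). The condition $\tau\le Ch^2$ is precisely what makes the prefactor $\tfrac{\tau}{4}$ times these $h$-powers bounded, and with $C$ small enough the defect is dominated by $C_{\mathrm{FCT}}\tau\|u_h^{n-1}\|_{\mathrm{FCT}}^2$. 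The delicate point is the bookkeeping of the $h$-powers generated by the inverse and trace inequalities, so that $\tau\le Ch^2$ compensates them exactly and the resulting constant is swallowed by $C_{\mathrm{FCT}}$; this is where the argument is most fragile.

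It remains to treat the lumped-mass stabilization and to sum. Writing $\check d_h^M(u^{\mathrm{FE}},v)$ in its edge formulation in the spirit of~\eqref{eq:d_h_edge_formulation}, it carries the discrete time difference $\nabla(\bu_h^n-u_h^{n-1})\cdot\bt_E$; upon testing and summing in $m$ this becomes a telescoping sum of $\sum_{E}|m_E|h_E\|\nabla u_h^{m}\cdot\bt_E\|_{0,E}^2$, whose only surviving lower-order piece is the initial term $\sum_{E\in\mathcal E_h}|m_E|h_E\|\nabla u_h^{0}\cdot\bt_E\|_{0,E}^2$ appearing on the right of~\eqref{eq:stability_FEM_FCT_fe}. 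The source contributions are handled by Cauchy--Schwarz and Young's inequality, $\tau(f_{n-1},\cdot)\le \tfrac{\tau}{2}\|f_{n-1}\|_0^2+\tfrac{\tau}{2}\|\cdot\|_0^2$, the quadratic piece being absorbed into the coercive term. Finally I would multiply through by $\tau$, sum over $m=1,\dots,n$ so that the $L^2$ terms telescope to $\|\bu_h^n\|_0^2-\|\bu_h^0\|_0^2$, collect the accumulated coercive contributions into $C\tau\sum_{m=1}^n\|u_h^m\|_{\mathrm{FCT}}^2$, and obtain~\eqref{eq:stability_FEM_FCT_fe}.
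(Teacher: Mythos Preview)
Your route is viable but genuinely different from the paper's. The paper tests \eqref{eq:var_form_fe} once, with $v=\bu_h^n$ (not $u_h^{n-1}$). With that choice the polarization identity produces the defect $+\tfrac1\tau\|\bu_h^n-u_h^{n-1}\|_0^2$ with a \emph{positive} sign on the left; the price is that $a_h$ and $\bar d_h^D$ are evaluated at the old level, so the authors add and subtract $a_h(\bu_h^n,\bu_h^n)+\bar d_h^D(\bu_h^n,\bu_h^n)$ and estimate the cross terms $a_h(\bu_h^n-u_h^{n-1},\bu_h^n)$, $\bar d_h^D(\bu_h^n-u_h^{n-1},\bu_h^n)$ directly via the inverse and trace inequalities. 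Those estimates produce factors $h^{-2}\|\bu_h^n-u_h^{n-1}\|_0^2$, which are then absorbed by the positive defect under $\tau\le Ch^2$. Your two-test strategy (first $v=u_h^{n-1}$ to obtain coercivity without cross terms, then $v=u^{\mathrm{FE}}$ to bound the now-negative defect) reaches the same CFL restriction by a more modular path; the paper's single-test argument is shorter and avoids the second variational identity altogether.

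One point in your outline is incomplete. When you test $\check d_h^M(u^{\mathrm{FE}},\cdot)$ with $u_h^{n-1}$ and apply the polarization identity on each edge, you do not get a clean telescope: you obtain
\[
\tfrac1\tau\sum_{E}|m_E|h_E\Big(\|\nabla\bu_h^n\cdot\bt_E\|_{0,E}^2-\|\nabla u_h^{n-1}\cdot\bt_E\|_{0,E}^2-\|\nabla(\bu_h^n-u_h^{n-1})\cdot\bt_E\|_{0,E}^2\Big),
\]
so besides the telescoping pair there is an additional \emph{negative} edge defect that must be moved to the right and controlled together with $\tfrac{\tau}{4}\|u^{\mathrm{FE}}\|_0^2$. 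This term is harmless (via \eqref{eq:mij}, Lemma~\ref{lem:tang_derivative}, the trace and inverse inequalities it is bounded by $C\tau^{-1}\|\bu_h^n-u_h^{n-1}\|_0^2$ and falls under the same CFL absorption), but it is not captured by your description ``becomes a telescoping sum''. In the paper's version this same term appears with the opposite sign (because the test function is $\bu_h^n$) and can simply be dropped.
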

\begin{proof}
Taking $v=\bu_h^n$ in Eq.~\eqref{eq:var_form_fe} and using
$$
\left( \bu_h^n-u_h^{n-1},\bu_h^n\right)=\frac{1}{2}\left( \|\bu_h^n\|_0^2-\|u_h^{n-1}\|_0^2+\|\bu_h^n-u_h^{n-1}\|_0^2\right),
$$
adding $a(\bu_h^n,\bu_h^n)$ and $d_h(\bu_h^n,\bu_h^n)$ on both sides and using the Cauchy-Schwarz inequality we get
\begin{eqnarray}\label{eq:discrete_var_form}
\lefteqn{\frac{1}{\tau}\big{(} \|\bu_h^n\|_0^2-\|u_h^{n-1}\|_0^2+\|\bu_h^n-u_h^{n-1}\|_0^2\big{)}}\nonumber \\
&&+a(\bu_h^n,\bu_h^n)+d_h^D(\bu_h^n,\bu_h^n)+\check{d}_h^M(u^{\mathrm{FE}},\bu_h^n)\nonumber \\
&\leq &\|f_{n-1}\|_0\|\bu_h^n\|_0+|a(\bu_h^n-u_h^{n-1},u_h^n)|+|d_h^D(\bu_h^n-u_h^{n-1},\bu_h^n)|.
\end{eqnarray}
Now using the Cauchy-Schwarz inequality, Eq.~\eqref{eq:inverse_inequality}, and Young's inequality
\begin{eqnarray}\label{eq:bilinear_form_discrete}
\left|a(\bu_h^n-u_h^{n-1},\bu_h^n)\right| & = &\big|\varepsilon(\nabla (\bu_h^n-u_h^{n-1}),\nabla \bu_h^n)+(\bb\cdot \nabla (\bu_h^n-u_h^{n-1}),\bu_h^n)\nonumber \\
&&+c(\bu_h^n-u_h^{n-1},\bu_h^n)\big|\nonumber \\
&\leq &\varepsilon |\bu_h^n-u_h^{n-1}|_1|\bu_h^n|_1+\|\bb\|_{\infty}|\bu_h^n-u_h^{n-1}|_1\|\bu_h^n\|_0 \nonumber \\
&&+\|c\|_{\infty}\|\bu_h^n-u_h^{n-1}\|_0\|\bu_h^n\|_0\nonumber \\
&\leq & \left(\varepsilon^{1/2} \cin^2h^{-1}+\|\bb\|_{\infty}\cin h^{-1}+\|c\|_{\infty}\right)\nonumber \\
&& \times \|\bu_h^n-u_h^{n-1}\|_0\|\bu_h^n\|_{\mathrm{FCT}}\nonumber \\
&\leq &\left(\varepsilon^{1/2} \cin^2h^{-1}+\|\bb\|_{\infty}\cin h^{-1}+\|c\|_{\infty}\right)^2\nonumber \\
&& \times \frac{3}{2}\|\bu_h^n-u_h^{n-1}\|_0^2 +\frac{\|\bu_h^n\|_{\mathrm{FCT}}^2}{6}.
\end{eqnarray}
Next using the edge formulation Eq.~\eqref{eq:d_h_D_edge_formulation}, the Cauchy-Schwarz inequality, the trace inequality, the inverse estimate, and Young's inequality we get
\begin{eqnarray}\label{eq:diffusion_discrete}
|d_h^D(\bu_h^n-u_h^{n-1},\bu_h^n)|&\leq & \sum_{E\in \mathcal{E}_h}|d_E|h_E\|\nabla (\bu_h^n-u_h^{n-1})\cdot \bt_E\|_{0,E}\|\nabla \bu_h^n\cdot \bt_E\|_{0,E}\nonumber \\
&\leq & \sum_{E\in \mathcal{E}_h}|d_E|h_E\|\nabla (\bu_h^n-u_h^{n-1})\|_{0,E}\|\nabla \bu_h^n\|_{0,E}\nonumber \\
&\leq & \underset{E\in \mathcal{E}_h}{\mathrm{max}}(|d_E|)\left(\sum_{E\in \mathcal{E}_h}h_E\|\nabla (\bu_h^n-u_h^{n-1})\|_{0,E}^2\right)^{1/2}\nonumber \\
&& \times \left(\sum_{E\in \mathcal{E}_h}h_E\|\nabla \bu_h^{n}\|_{0,E}\right)^{1/2}\nonumber \\
&\leq &\underset{E\in \mathcal{E}_h}{\mathrm{max}}(|d_E|)\|\nabla (\bu_h^n-u_h^{n-1})\|_0\|\nabla \bu_h^n\|_0\nonumber \\
&\leq & C_{d_E}\cin h^{-1}\|\bu_h^n-u_h^{n-1}\|_0\|\nabla\bu_h^n\|_0\nonumber \\
&\leq & C_{d_E}^2\left( \frac{3h^{-2}}{2\varepsilon}\|\bu_h^n-u_h^{n-1}\|_0^2\right)+\frac{\|\bu_h^n\|_{\mathrm{FCT}}^2}{6},
\end{eqnarray}
where $C_{d_E}$ is the maximum of $|d_E|$ and independent of $h$.

Lastly, we have to approximate $\check{d}_h^M(u^{\mathrm{FE}},\bu_h^n)$. We can have an edge representation of this term similar to Eq.~\eqref{eq:d_h_edge_formulation}
$$
\check{d}_h^M(u^{\mathrm{FE}},\bu_h^n)=\sum_{E\in \mathcal{E}_h}\frac{2}{\tau}|m_E|h_E\left( \nabla (\bu_h^n-u_h^{n-1})\cdot \bt_E, \nabla \bu_h^n\cdot \bt_E\right)_E.
$$
By simple algebraic manipulation we get
\begin{eqnarray}\label{eq:time_discrete_1}
\check{d}_h^M(u^{\mathrm{FE}},\bu_h^n)& = &\frac{2}{\tau}\Bigg{(} \sum_{E\in \mathcal{E}_h}|m_E|h_E\|\nabla (\bu_h^n-u_h^{n-1})\cdot \bt_E\|_{0,E}^2\nonumber \\
&&-\sum_{E\in \mathcal{E}_h}|m_E|h_E\|\nabla u_h^{n-1}\cdot \bt_E\|_{0,E}^2\nonumber \\
&&+\sum_{E\in \mathcal{E}_h}|m_E|h_E(\nabla \bu_h^n\cdot \bt_E,\nabla u_h^{n-1}\cdot \bt_E)\Bigg{)},
\end{eqnarray}
and we also have
\begin{eqnarray}\label{eq:time_discrete_2}
\check{d}_h^M(u^{\mathrm{FE}},\bu_h^n)& = &\frac{2}{\tau}\Bigg{(} \sum_{E\in \mathcal{E}_h}|m_E|h_E\|\nabla \bu_h^n\cdot \bt_E\|_{0,E}^2\nonumber \\
&& -\sum_{E\in \mathcal{E}_h}|m_E|h_E(\nabla \bu_h^n\cdot \bt_E,\nabla u_h^{n-1}\cdot \bt_E)\Bigg{)}.
\end{eqnarray}
Adding Eq.~\eqref{eq:time_discrete_1} and Eq.~\eqref{eq:time_discrete_2} we finally have
\begin{eqnarray}\label{eq:time_discrete}
\check{d}_h^M(u^{\mathrm{FE}},\bu_h^n) & = &\frac{1}{\tau}\Bigg{(} \sum_{E\in \mathcal{E}_h}|m_E|h_E\|\nabla \bu_h^n\cdot \bt_E\|_{0,E}^2\nonumber \\
&& +\sum_{E\in \mathcal{E}_h}|m_E|h_E\|\nabla (\bu_h^n-u_h^{n-1})\cdot \bt_E\|_{0,E}^2\nonumber \\
&&-\sum_{E\in \mathcal{E}_h}|m_E|h_E\|\nabla u_h^{n-1}\cdot \bt_E\|_{0,E}^2\Bigg{)}.
\end{eqnarray}
Substituting Eq.~\eqref{eq:bilinear_form_discrete}, Eq.~\eqref{eq:diffusion_discrete}, and Eq.~\eqref{eq:time_discrete} in Eq.~\eqref{eq:discrete_var_form} and using
\begin{eqnarray*}
C_{\mathrm{FCT}}\|\bu_h^n\|_{\mathrm{FCT}}^2& \leq& a(\bu_h^n,\bu_h^n),\nonumber \\
0& \leq &d_h^D(\bu_h^n,\bu_h^n)\qquad \mathrm{and,}\nonumber \\
0& \leq &\sum_{E\in \mathcal{E}_h}|m_E|h_E\|\nabla (\bu_h^n-u_h^{n-1})\cdot \bt_E\|_{0,E}^2,\nonumber
\end{eqnarray*}
we get
\begin{eqnarray*}
\lefteqn{\|\bu_h^n\|_0^2+\|\bu_h^n-u_h^{n-1}\|_0^2+\sum_{E\in \mathcal{E}_h}|m_E|h_E\|\nabla \bu_h^n\cdot \bt_E\|_{0,E}^2+\tau C_{\mathrm{FCT}}\|u_h^n\|_{\mathrm{FCT}}^2}\nonumber \\
&\leq &C\tau\|f_{n-1}\|^2_0\nonumber \\
&& +C\frac{\tau}{h^2}\left(\varepsilon \cin^2+\|\bb\|_{\infty}\cin +\|c\|_{\infty}h + \frac{C^2_{d_E}}{\varepsilon}\right)\|\bu_h^n-u_h^{n-1}\|_0^2 \nonumber \\
&& +\sum_{E\in \mathcal{E}_h}|m_E|h_E\|\nabla u_h^{n-1}\cdot \bt_E\|_{0,E}^2+\|u_h^{n-1}\|_0^2.\nonumber
\end{eqnarray*}
Summing over $n=1,\ldots,N$ and bounding $\sum_{E\in \mathcal{E}_h}|m_E|h_E\|\nabla u_h^{N}\cdot \bt_E\|_{0,E}^2$ by below we have stability whenever $\tau\leq Ch^2$, where $C$ is independent of $h$.
\end{proof}

The next theorem provides the stability of the nonlinear FEM-FCT scheme Eq.~\eqref{eq:fdp_nl}. 
\begin{theorem}\label{thm:stab-nl_fem_fct} 
Let Eq.~\eqref{eq:asmpt} hold. With the additional condition
\begin{equation}\label{eq:assumption_limiter}
\alpha_E\leq \frac{1}{\left(C_{\mathrm{T_1}}C_{\mathrm{inv}}C_{m_E}\right)^2}\mathrm{min}\left\lbrace \tau ,\frac{c_0}{2}\right\rbrace,
\end{equation}
then the solution \(u_h^n\) of Eq.~\eqref{eq:fdp_nl} satisfies at $t_n=n\tau $
\begin{equation}\label{eq:stability_non_linear_FEM_FCT}
\|u_h^n\|_0^2+C\tau \sum_{m=1}^n\|u_h^m\|_{\mathrm{FCT}}^2\leq \|u_h^0\|_0^2+\frac{1}{C_{\mathrm{T_1}}^2}\|\nabla u_h^0\|_0^2+ \frac{\tau }{c_0}\sum_{m =1}^n\|f^m\|_0^2
\end{equation}
where \(C\) and \(C_{T_1}\) are constants that do not depend on \(h\) and \(\tau \).
\end{theorem}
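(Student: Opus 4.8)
The plan is to run a discrete energy argument on Eq.~\eqref{eq:fdp_nl}, testing with $v_h=u_h^n$. First I would use the identity $(u_h^n-u_h^{n-1},u_h^n)=\frac12\big(\|u_h^n\|_0^2-\|u_h^{n-1}\|_0^2+\|u_h^n-u_h^{n-1}\|_0^2\big)$ to turn the mass term into a telescoping $L^2$ contribution plus the nonnegative increment $\|u_h^n-u_h^{n-1}\|_0^2$, and I would invoke the coercivity estimate~\eqref{eq:coercive} to bound $\tau\big[a_h(u_h^n,u_h^n)+d_h^D(u_h^n;u_h^n,u_h^n)\big]$ from below by $\tau C_{\mathrm{FCT}}\|u_h^n\|_{\mathrm{FCT}}^2$. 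The source term is handled by a Young inequality weighted by $c_0$, namely $\tau(f^n,u_h^n)\le \frac{\tau}{2c_0}\|f^n\|_0^2+\frac{\tau c_0}{2}\|u_h^n\|_0^2$, the second summand absorbing into the coercive term through $c_0\|u_h^n\|_0^2\le\|u_h^n\|_{\mathrm{FCT}}^2$; this is precisely what produces the $\frac{\tau}{c_0}\sum_m\|f^m\|_0^2$ contribution on the right after rescaling.

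The heart of the argument is the $d_h^M$ term. With the backward Euler time derivative $u_h'=(u_h^n-u_h^{n-1})/\tau$ and the edge representation~\eqref{eq:d_h_edge_formulation}, it reads $\tau\,d_h^M(u_h^n;u_h',u_h^n)=\sum_{E\in\mathcal{E}_h}(1-\alpha_E(u_h^n))|m_E|h_E\big(\nabla(u_h^n-u_h^{n-1})\cdot\bt_E,\ \nabla u_h^n\cdot\bt_E\big)_E$, the factor $\tau$ cancelling. Applying the edgewise polarization $(a-b,a)=\frac12(\|a\|^2-\|b\|^2+\|a-b\|^2)$ splits this into a current-level term carrying $\|\nabla u_h^n\cdot\bt_E\|_{0,E}^2$, a previous-level term carrying $\|\nabla u_h^{n-1}\cdot\bt_E\|_{0,E}^2$, and a nonnegative increment in $\nabla(u_h^n-u_h^{n-1})\cdot\bt_E$ which I simply keep on the left.

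The main obstacle is that the weights $\alpha_E=\alpha_E(u_h^n)$ depend on the current solution, so these current- and previous-level edge energies do not telescope directly across time levels. I would circumvent this with the exact algebraic split $1-\alpha_E=1-\alpha_E$, separating each edge sum into its $\alpha_E$-free part $\widehat B_m:=\frac12\sum_{E}|m_E|h_E\|\nabla u_h^m\cdot\bt_E\|_{0,E}^2$, which telescopes cleanly and leaves only $\widehat B_0$ on the right after summation, and an $\alpha_E$-weighted remainder. The previous-level remainder has a favourable sign and is discarded; the current-level remainder $\frac12\sum_E\alpha_E(u_h^n)|m_E|h_E\|\nabla u_h^n\cdot\bt_E\|_{0,E}^2$ is exactly the quantity the limiter bound~\eqref{eq:assumption_limiter} is tailored to control. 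Using Lemma~\ref{lem:tang_derivative}, the trace inequality~\eqref{eq:trace_inequality} (whose second term vanishes on $\mathbb{P}_1$ since $\nabla u_h^n$ is elementwise constant), the inverse inequality~\eqref{eq:inverse_inequality}, and the mass estimate~\eqref{eq:mij}, I would convert $|m_E|h_E\|\nabla u_h^n\cdot\bt_E\|_{0,E}^2$ into a multiple of $\|u_h^n\|_{0,K}^2$; the constants $C_{\mathrm{T_1}}^2$, $\cin^2$ and $C_{m_E}$ thereby generated are precisely those in the denominator of~\eqref{eq:assumption_limiter}, so that inserting $\alpha_E\le (C_{\mathrm{T_1}}\cin C_{m_E})^{-2}\min\{\tau,c_0/2\}$ bounds the remainder by $C\min\{\tau,c_0/2\}\|u_h^n\|_0^2$ with all powers of $h$ cancelling (hence no CFL condition is needed, in contrast to Theorem~\ref{thm:stab_fe_fem_fct}). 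This absorbs into $\tau C_{\mathrm{FCT}}\|u_h^n\|_{\mathrm{FCT}}^2$ on the left, leaving a strictly positive multiple $C$ there.

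Finally I would sum the resulting one-step inequality over $m=1,\ldots,n$: the $L^2$ terms and the $\alpha_E$-free edge energies $\widehat B_m$ telescope, $\widehat B_n$ and the increments $\|u_h^m-u_h^{m-1}\|_0^2$ are dropped by positivity, and the surviving initial energy $\widehat B_0$ is bounded by $\frac{1}{C_{\mathrm{T_1}}^2}\|\nabla u_h^0\|_0^2$ via the same trace, tangential, and mass estimates, yielding exactly the stated right-hand side. Because every current-level quantity absorbs directly into the coercive FCT norm, no discrete Gronwall step is required; the decomposition into the telescoping $\alpha_E$-free energy plus the limiter-controlled remainder is the crucial device that tames the solution dependence of the weights.
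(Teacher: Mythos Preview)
Your overall architecture matches the paper's: test with $u_h^m$, use the $L^2$ polarization identity, invoke coercivity~\eqref{eq:coercive}, handle $(f^m,u_h^m)$ by a $c_0$-weighted Young inequality, and turn the unweighted part of $d_h^M$ into a telescoping edge energy via the identity~\eqref{eq:expression_for_d_h1}. The bound $\widehat B_0\le C_{\mathrm{T}_1}^{-2}\|\nabla u_h^0\|_0^2$ is also what the paper does at the end.

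The gap is in the absorption of your $\alpha_E$-weighted current-level remainder. After you let the factor $\tau$ cancel against $u_h'=(u_h^n-u_h^{n-1})/\tau$ and then polarize, the only bad term is
\[
\tfrac12\sum_{E}\alpha_E\,|m_E|\,h_E\,\|\nabla u_h^n\cdot\bt_E\|_{0,E}^2,
\]
which the chain (Lemma~\ref{lem:tang_derivative}, trace, inverse, $|m_E|\le C_{m_E}h^d$) together with~\eqref{eq:assumption_limiter} bounds by $C'\min\{\tau,c_0/2\}\,\|u_h^n\|_0^2$ with a fixed $C'$. But this does \emph{not} absorb into $\tau C_{\mathrm{FCT}}\|u_h^n\|_{\mathrm{FCT}}^2$: the available $L^2$ piece of the FCT norm is only $\tau C_{\mathrm{FCT}}c_0\|u_h^n\|_0^2$, and $\min\{\tau,c_0/2\}\le C\,\tau c_0$ fails uniformly (take $\tau=c_0/2$ and let $c_0\to 0$). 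So ``leaving a strictly positive multiple $C$'' cannot be achieved without an unstated lower bound on $c_0$ or on $\tau$.

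The paper avoids this by splitting \emph{before} polarizing: it moves the full cross term $\tau\sum_E\alpha_E|m_E|h_E(\nabla u_h'\cdot\bt_E,\nabla u_h^m\cdot\bt_E)_E$ to the right (still carrying the prefactor $\tau$ and the derivative $u_h'$), then applies Cauchy--Schwarz and Young's inequality edgewise to produce two squares,
\[
\tfrac{\tau}{2}\sum_E\alpha_E|m_E|h_E\|\nabla u_h'\|_{0,E}^2
\quad\text{and}\quad
\tfrac{\tau}{2}\sum_E\alpha_E|m_E|h_E\|\nabla u_h^m\|_{0,E}^2.
\]
For the first it uses the branch $\alpha_E\le(\cdots)^{-2}\tau$ of~\eqref{eq:assumption_limiter}; combined with $\|u_h'\|_0^2=\tau^{-2}\|u_h^m-u_h^{m-1}\|_0^2$ this yields $\tfrac12\|u_h^m-u_h^{m-1}\|_0^2$, which is absorbed by the increment from the mass identity. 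For the second it uses $\alpha_E\le(\cdots)^{-2}c_0/2$; since the prefactor $\tau$ is still present this gives $\tfrac{c_0\tau}{4}\|u_h^m\|_0^2\le\tfrac{\tau}{4}\|u_h^m\|_{\mathrm{FCT}}^2$, which absorbs cleanly. This is precisely why the assumption carries a $\min$: the two branches are consumed by two different terms. Your polarization discards the $u_h'$ contribution too early, so you never get a term that uses the $\tau$-branch, and the remaining current-level square lacks the extra $\tau$ factor needed for absorption. The fix is to keep the $\alpha$-weighted piece as a bilinear cross term and split it with Young, exactly as the paper does.
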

\begin{proof}
The proof follows the standard approach. Setting \(v_h=u_h^m\) in Eq.~\eqref{eq:fdp_nl}, we get
\begin{eqnarray*}
\left( u_h^m -u_h^{m -1},u_h^m \right) &+&
\tau \left[a_h(u_h^m ,u_h^m ) +d_h^D(u_h^m ;u_h^m ,u_h^m )
+d_h^M(u_h^m ;u_h,u_h^m )\right] \nonumber \\
& = & \tau (f^m ,u_h^m ).\nonumber
\end{eqnarray*}
Using \((a-b,a)=\frac{1}{2}\left( \|a\|_0^2 - \|b\|_0^2 + \|a-b\|_0^2\right) \) for $a,b\in L^2(\Omega)$, Eq.~\eqref{eq:nl_forms}, and Eq.~\eqref{eq:coercive}, it follows that
\begin{eqnarray}\label{eq:temp_eqn_1}
\lefteqn{\frac{1}{2}\left(  \|u_h^m \|_0^2-\|u_h^{m -1}\|_0^2+\|u_h^m -u_h^{m-1}\|_0^2\right)}\nonumber \\
& + & C_a\tau  \|u_h^m \|_{\mathrm{FCT}}^2
+\tau \sum_{i,j=1}^Nm_{ij}\left(u_{hi}'- u_{hj}' \right)u_{hi}^m \nonumber \\
& \leq & \tau \sum_{i,j=1}^N\alpha_{ij}m_{ij}\left( u_{hi}'- u_{hj}' \right)u_{hi}^m  + \tau  (f^m,u_h^m ).
\end{eqnarray}
Using \(u'=(u^m -u^{m -1})/\tau \) and the Cauchy-Schwarz inequality for the last term on the left-hand side of Eq.~\eqref{eq:temp_eqn_1}, we get 
\begin{eqnarray*}
\lefteqn{\tau  \sum_{i,j=1}^Nm_{ij}\left(u_{hi}'- u_{hj}' \right)u_{hi}^m}\nonumber \\
& = &\sum_{E\in \mathcal{E}_h} |m_E|h_E \left(\nabla (u_h^m -u_h^{m -1})\cdot \bt_E,\nabla u_h^m \cdot \bt_E\right)_{0,E}\nonumber \\
& = &\sum_{E\in \mathcal{E}_h} |m_E|h_E \Big\lbrace\left(\nabla (u_h^m -u_h^{m -1})\cdot \bt_E,\nabla (u_h^m -u_h^{m -1})\cdot \bt_E\right)_{0,E}\nonumber \\
&& +  \left(\nabla (u_h^m -u_h^{m -1})\cdot \bt_E,\nabla u_h^{m -1}\cdot \bt_E\right)_{0,E}\Big\rbrace\nonumber \\
& = &\sum_{E\in \mathcal{E}_h} |m_E|h_E \Big\lbrace\|\nabla (u_h^m -u_h^{m -1})\|_{0,E}^2 \nonumber \\
&& + \left(\nabla u_h^m \cdot \bt_E,\nabla u_h^{m -1}\cdot \bt_E\right)_{0,E} 
	 - \|\nabla u_h^{m -1}\cdot \bt_E\|_{0,E}^2\Big\rbrace.
\end{eqnarray*}
One can also estimate the same term as follows
\begin{eqnarray*}
\lefteqn{\tau  \sum_{i,j=1}^Nm_{ij}\left(u_{hi}'- u_{hj}' \right)u_{hi}^n}\nonumber \\
&& = \sum_{E\in \mathcal{E}_h} |m_E|h_E \left\{
	\|\nabla u_h^{m }\cdot \bt_E\|_{0,E}^2 - \left(\nabla u_h^m \cdot \bt_E,\nabla u_h^{m -1}\cdot \bt_E\right)_{0,E}
	\right\}.
\end{eqnarray*}
Adding the above estimate, we get
\begin{eqnarray}\label{eq:expression_for_d_h1}
\lefteqn{	\tau  \sum_{i,j=1}^Nm_{ij}\left(u_{hi}'- u_{hj}' \right)u_{hi}^m}\nonumber \\
& = & \frac12 \sum_{E\in \mathcal{E}_h} |m_E|h_E 
	\Big\lbrace
	\|\nabla (u_h^m -u_h^{m -1})\|_{0,E}^2 + \|\nabla u_h^{m }\cdot \bt_E\|_{0,E}^2 \nonumber \\
&&- \|\nabla u_h^{m -1}\cdot \bt_E\|_{0,E}^2
	\Big\rbrace.
\end{eqnarray}
The first term on the right-hand side of Eq.~\eqref{eq:temp_eqn_1} uses the edge formulation, 
the Cauchy-Schwarz inequality, estimate Eq.~\eqref{eq:lemma_1} and the Young's inequality
\begin{eqnarray*}
\tau \sum_{i,j=1}^N\alpha_{ij}m_{ij}\left( u_{hi}'- u_{hj}' \right)u_{hi}^m  
&=& \tau \sum_{E\in \mathcal{E}_h}\alpha_E|m_E|\left( \nabla u_h'.\cdot \bt_E,
\nabla u_h^m\cdot \bt_E\right)_E  h_E\\
&\le& \tau \sum_{E\in \mathcal{E}_h}\alpha_E\;|m_E|\;\|\nabla u'_h\cdot \bt_E\|_{0,E}\nonumber \\
&&\times \|\nabla u_h^m \cdot \bt_E\|_{0,E} h_E \\
&\le& \tau \sum_{E\in \mathcal{E}_h}\alpha_E\;|m_E|\;\|\nabla u_h'\|_{0,E}\|\nabla u_h^m \|_{0,E} h_E\\
& \le &\tau \sum_{E\in \mathcal{E}_h}\frac{\alpha_E\;|m_E|\; h_E}{2} \|\nabla u_h'\|_{0,E}^2\nonumber \\
&& + \tau \sum_{E\in \mathcal{E}_h}\frac{\alpha_E\;|m_E|\;h_E}{2} \|\nabla u_h^m \|_{0,E}^2.
\end{eqnarray*} 
Now, using the bounds Eq.~\eqref{eq:mij} and Eq.~\eqref{eq:assumption_limiter}, the local trace inequality Eq.~\eqref{eq:trace_inequality} and an inverse inequality Eq.~\eqref{eq:inverse_inequality} gives
\begin{eqnarray*}
\tau \sum_{i,j=1}^N\alpha_{ij}m_{ij}\left( u_{hi}'- u_{hj}' \right)u_{hi}^m  
&=&\sum_{E\in \mathcal{E}_h}\frac{h^3\tau ^2}{2\left(C_{\mathrm{T_1}}C_{\mathrm{inv}}\right)^2}\|\nabla u_h'\|_{0,E}^2\nonumber \\
&&+\sum_{E\in \mathcal{E}_h}\frac{c_0 h^3\tau }{4\left(C_{\mathrm{T_1}}C_{\mathrm{inv}}\right)^2}\|\nabla u_h^m \|_{0,E}^2\nonumber\\
&\leq & \sum_{K\in \mathcal{T}_h}\frac{\tau ^2}{2}\|u_h'\|_{0,K}^2+\sum_{K\in \mathcal{T}_h}\frac{c_0 \tau }{4} \|u_h^n\|_{0,K}^2\nonumber\\
&=& \sum_{K\in \mathcal{T}_h}\frac{1}{2}\|u^m _h-u^{m -1}_h\|_{0,K}^2\nonumber \\
&&+\sum_{K\in \mathcal{T}_h}\frac{c_0}{4}\tau \|u_h^m \|_{0,K}^2\nonumber\\
&\leq &\frac{1}{2}\|u_h^m -u_h^{m -1}\|_0^2+\frac{c_0 \tau}{4} \|u_h^m \|_0^2.
\end{eqnarray*}
The estimate for the second term on the right-hand side of Eq.~\eqref{eq:temp_eqn_1} uses the Cauchy-Schwarz inequality, and the Young’s inequality to get
\begin{equation*}
\tau (f^m ,u_h^m ) \leq \frac{\tau}{c_0 }  \|f^m \|^2_0+\frac{c_0\tau}{4}
\|u_h^m \|_0^2.
\end{equation*}

Collecting the above estimates in Eq.~\eqref{eq:temp_eqn_1} and the fact that the first 
term in Eq.~\eqref{eq:expression_for_d_h1} is positive and can bounded by zero, we get
\begin{multline*}
\frac{1}{2}\|u_h^m \|_0+C\tau \|u_h^m \|_{\mathrm{FCT}}^2+\frac{1}{2}\sum_{E\in \mathcal{E}_h}|m_E|h_E\|\nabla u_h^m \cdot \bt_E\|_{0,E}^2\\
\leq \frac{1}{2}\|u_h^{m -1}\|_0^2+\frac{\tau}{c_0} \|f_h^m \|_0^2+\frac{1}{2}\sum_{E\in \mathcal{E}_h}|m_E|h_E\|\nabla u_h^{m -1}\cdot \bt_E\|_{0,E}^2.
\end{multline*}
Summing over $m =1,\dots,n$ leads to 
\begin{multline*}
\frac{1}{2}\|u_h^n\|_0+C\tau \sum_{m =1}^{n}\|u_h^j\|_{\mathrm{FCT}}^2+\frac{1}{2}\sum_{E\in \mathcal{E}_h}|m_E|h_E\|\nabla u_h^n\cdot \bt_E\|_{0,E}^2\\
\leq \frac{1}{2}\|u_h^0\|_0^2+\frac{\tau}{c_0} \sum_{j=1}^{n}\|f_h^j\|_0^2+\frac{1}{2}\sum_{E\in \mathcal{E}_h}|m_E|h_E\|\nabla u_h^{0}\cdot \bt_E\|_{0,E}^2,
\end{multline*}
and then bounding the terms
\begin{eqnarray*}
\frac{1}{2}\sum_{E\in \mathcal{E}_h}|m_E|\;h_E\;\|\nabla u_h^n\cdot \bt_E\|_{0,E}^2 & \ge & 0\quad \mathrm{and,}\nonumber \\
\frac{1}{2}\sum_{E\in \mathcal{E}_h}|m_E|\;h_E\;\|\nabla u_h^{0}\cdot \bt_E\|_{0,E}^2 & \le & \frac{1}{C_{\mathrm{T_1}}^2}\|\nabla u_h^0\|_0^2
\end{eqnarray*}
gives the statement of the theorem.
\end{proof}
\begin{remark}
	The assumption Eq.~\eqref{eq:assumption_limiter} on the limiter is only needed in the region where the convection is dominant. For the regions away from layers, the value of \(\alpha_E\) is close to \(1\) and the effect of stabilization vanishes.
\end{remark}

The stability of the linear FEM-FCT scheme is given in the next theorem.
\begin{theorem}\label{thm:stab-lin_fem_fct}
Let Eq.~\eqref{eq:asmpt} and the conditions of Theorem~\ref{thm:stab_fe_fem_fct} be fulfilled. Then, the solution of Eq.~\eqref{eq:fdp_lp} satisfies 	at \(t_n=n\tau\)
\begin{eqnarray}
\lefteqn{\|u_h^n\|_0^2+C\tau \sum_{m =1}^n\|u_h^j\|_{\mathrm{FCT}}^2}\nonumber \\
& \leq &\|u_h^0\|_0^2+\frac{2\tau}{c_0}\sum_{m =1}^n\left(\|f^j\|_0^2
+\|f^{*(j-1)}\|_0^2\right)+\frac{1}{C^2_{\mathrm{T}_1}}\|\nabla u_h^0\|_0^2.
\end{eqnarray}
\end{theorem}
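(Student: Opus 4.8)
The plan is to adapt the energy argument of Theorem~\ref{thm:stab-nl_fem_fct} to the linear scheme Eq.~\eqref{eq:fdp_lp}, the essential difference being that the limiter no longer appears inside the bilinear forms on the left-hand side but is instead bundled into the source term $(f^*,v_h)$ on the right. First I would test Eq.~\eqref{eq:fdp_lp} with $v_h=u_h^m$ and apply the polarization identity $(a-b,a)=\tfrac12(\|a\|_0^2-\|b\|_0^2+\|a-b\|_0^2)$ to the term $(u_h^m-u_h^{m-1},u_h^m)$, exactly as in Eq.~\eqref{eq:temp_eqn_1}. This produces the discrete time-difference $\tfrac12(\|u_h^m\|_0^2-\|u_h^{m-1}\|_0^2+\|u_h^m-u_h^{m-1}\|_0^2)$ together with $\tau[a_h(u_h^m,u_h^m)+\bar d_h^D(u_h^m,u_h^m)+\bar d_h^M(u_h^m,u_h^m)]$ on the left and the two sources $\tau(f^m,u_h^m)+\tau(f^*,u_h^m)$ on the right.

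For the coercive part I would observe that, since $0\le 1-\alpha_E\le 1$, the edge formulation Eq.~\eqref{eq:d_h_D_edge_formulation} gives $\bar d_h^D(u_h^m,u_h^m)\ge d_h^D(u_h^m;u_h^m,u_h^m)\ge0$; hence $a_h(u_h^m,u_h^m)+\bar d_h^D(u_h^m,u_h^m)\ge a_{\mathrm{FCT}}(u_h^m,u_h^m)\ge C_{\mathrm{FCT}}\|u_h^m\|_{\mathrm{FCT}}^2$ by Eq.~\eqref{eq:coercive}, so the \emph{unlimited} diffusion term is harmless. The mass term $\tau\bar d_h^M(u_h^m,u_h^m)$ is treated verbatim as in the nonlinear proof: writing $u_h'=(u_h^m-u_h^{m-1})/\tau$ in the edge formulation Eq.~\eqref{eq:d_h_edge_formulation} and repeating the two-sided algebraic manipulation leading to Eq.~\eqref{eq:expression_for_d_h1}, it equals $\tfrac12\sum_{E}|m_E|h_E\{\|\nabla(u_h^m-u_h^{m-1})\cdot\bt_E\|_{0,E}^2+\|\nabla u_h^m\cdot\bt_E\|_{0,E}^2-\|\nabla u_h^{m-1}\cdot\bt_E\|_{0,E}^2\}$, whose first summand is dropped as nonnegative.

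It then remains to control the two source terms. Each is handled by Cauchy--Schwarz and Young's inequality, $\tau(f^m,u_h^m)\le \tfrac{\tau}{c_0}\|f^m\|_0^2+\tfrac{c_0\tau}{4}\|u_h^m\|_0^2$ and likewise for $\tau(f^*,u_h^m)$; the two quadratic residuals combine into $\tfrac{c_0\tau}{2}\|u_h^m\|_0^2\le\tfrac{\tau}{2}\|u_h^m\|_{\mathrm{FCT}}^2$, which is absorbed into the coercive term at the cost of shrinking the constant $C$. Summing over $m=1,\dots,n$ telescopes the $L^2$ differences and the edge mass terms; the final contribution $\tfrac12\sum_E|m_E|h_E\|\nabla u_h^n\cdot\bt_E\|_{0,E}^2$ is nonnegative and discarded, while the initial one is bounded via the trace inequality Eq.~\eqref{eq:trace_inequality} and the mass bound Eq.~\eqref{eq:mij} by $\tfrac{1}{C_{\mathrm{T_1}}^2}\|\nabla u_h^0\|_0^2$, yielding the stated estimate with the factor $\tfrac{2\tau}{c_0}$ in front of $\|f^m\|_0^2+\|f^{*(m-1)}\|_0^2$.

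I expect the only genuinely delicate point to be bookkeeping rather than analysis. Unlike the nonlinear case, no limiter condition such as Eq.~\eqref{eq:assumption_limiter} is needed here, because the $\alpha_{ij}$-weighted fluxes have been moved into $f^*$ and are simply carried along on the right-hand side as $\|f^{*(m-1)}\|_0^2$. The role of the inherited hypotheses of Theorem~\ref{thm:stab_fe_fem_fct}, in particular the CFL-type restriction Eq.~\eqref{eq:assumption_space_time}, is to guarantee that the forward-Euler predictor Eq.~\eqref{eq:var_form_fe} defining $f^*$ through $\nu^{n-1/2}$ and $\bu$ is itself stable, so that the right-hand side is genuinely controlled by the data; within the energy identity above, $f^*$ enters purely as an external forcing. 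The main thing to verify carefully is that coercivity survives the replacement of $d_h^D$ by the unlimited $\bar d_h^D$ and that, after the two Young absorptions, a strictly positive multiple of $\|u_h^m\|_{\mathrm{FCT}}^2$ remains on the left.
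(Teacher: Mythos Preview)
Your argument is correct and in fact more direct than the paper's for the statement as written. The two proofs coincide through the testing step, the polarization identity, the coercivity $a_h+\bar d_h^D\ge a_{\mathrm{FCT}}$, and the telescoping mass identity Eq.~\eqref{eq:expression_for_d_h1}. They diverge at the term $\tau(f^{*(m-1)},u_h^m)$: you bound it in one stroke by Cauchy--Schwarz and Young, producing exactly the $\tfrac{2\tau}{c_0}\|f^{*(m-1)}\|_0^2$ that appears in the claimed estimate, and then absorb the residual $\tfrac{c_0\tau}{2}\|u_h^m\|_0^2$ into the coercive part. The paper instead unfolds $f^*$ into its constituent fluxes, rewrites everything in the edge formulation, and estimates the pieces via inverse and trace inequalities; this yields terms of the form $\tfrac{C\tau}{\varepsilon h^2}\|\bar u_h^m\|_0^2$ and $\tfrac{C\tau}{\varepsilon h^2}\|u_h^{m-1}-u_h^m\|_0^2$, which are then controlled by the CFL restriction Eq.~\eqref{eq:assumption_space_time} and the predictor stability of Theorem~\ref{thm:stab_fe_fem_fct}.

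What each buys: your route is shorter and shows that, for the inequality \emph{as stated} (with $\|f^{*(m-1)}\|_0^2$ left on the right-hand side), neither the CFL condition nor the predictor estimate is actually needed inside the energy argument; you are right that their role is only to ensure $f^*$ is itself bounded by data. The paper's longer route goes further and effectively replaces $\|f^{*(m-1)}\|_0^2$ by quantities controlled via Theorem~\ref{thm:stab_fe_fem_fct}, which is why those hypotheses are invoked. So your proof matches the displayed bound exactly, while the paper's proof is really establishing a slightly stronger, data-only estimate.
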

\begin{proof}
The proof starts with the same lines of arguments as Theorem~\ref{thm:stab-nl_fem_fct}. 
We have for \(v_h=u_h^m \)
\begin{eqnarray}\label{eq:lin_stab_t1}
\lefteqn{\frac{1}{2}\left( \|u_h^m \|_0^2-\|u_h^{m -1}\|_0^2+\|u_h^m -u_h^{m-1}\|_0^2\right)
+ C_a\tau  \|u_h^m \|_{\mathrm{FCT}}^2}\nonumber \\
& + &\tau  \sum_{i,j=1}^N m_{ij}\left( u_{hi}' -u_{hj}'\right) u_{hi}^m
\leq \tau (f^m ,u_h^m )+\tau (f^{*(m -1)},u_h^m ).
\end{eqnarray}
Comparing this with Eq.~\eqref{eq:temp_eqn_1}, it can be seen that the difference is only the right hand side. 

Applying the Cauchy-Schwarz inequality followed by Young's inequality for the first term on the right hand side gives
\begin{align*}
\tau  (f^m ,u_h^m )&\leq \frac{\tau }{c_0}\|f^m \|_0^2+\frac{c_0\tau}{4}\|u_h^m \|_0^2,
\end{align*}

Inserting this estimate and Eq.~\eqref{eq:expression_for_d_h1} in Eq.~\eqref{eq:lin_stab_t1} and ignoring the terms with positive contribution to get
\begin{eqnarray*}
\lefteqn{\frac12 \|u_h^m \|_0^2+ C_a\tau  \|u_h^m \|_{\mathrm{FCT}}^2
+ \frac12\sum_{E\in \mathcal{E}_h} |m_E|h_E \|\nabla u_h^{m }\cdot \bt_E\|_{0,E}^2} \nonumber \\
& \leq & \frac12 \|u_h^{m -1}\|_0^2 + \frac12\sum_{E\in \mathcal{E}_h} |m_E|h_E \|\nabla u_h^{m -1}\cdot \bt_E\|_{0,E}^2\nonumber \\
&&+\frac{\tau }{c_0}\|f^m \|_0^2+
\tau\left(f^{*(m -1)},u_h^m\right).
\end{eqnarray*}
To approximate the last term we will use the stability of the predictor step.
\begin{eqnarray*}
(f^*(m-1),u_h^m) & = &\sum_{i,j=1}^N\alpha_{ij}f_{ij}u_h^{mi}\nonumber \\
& = &\sum_{i,j=1}^N\alpha_{ij}\Big{(}2m_{ij}\left[ \bu_h^{mi}-u_h^{(m-1)i}-\bu_h^{mj}+u_h^{(n-1)j}\right]u_h^{mi}\nonumber \\
&&+\tau d_{ij}\left( 2\bu_h^{jm}-2\bu_h^{im}+u_h^{(m-1)i}-u_h^{(m-1)j}\right)u_h^{mi}\Big{)}\nonumber \\
& = &\sum_{i,j=1}^N\alpha_{ij}\Big{(}2(-m_{ij}+\tau d_{ij})\left(\bu_h^{mj}-\bu^{mi}_h\right)u_h^{mi}\nonumber \\
&& +(-2m_{ij}+\tau d_{ij})\left(u_h^{(n-1)i}-u_h^{(n-1)j}\right)u_h^{mi}\Big{)}.\nonumber
\end{eqnarray*}

Adding and subtracting $(-2m_{ij}+\tau d_{ij})\left(u_h^{ni}-u_h^{mj}\right)u_h^{mi}$ on the right-hand side, we get
\begin{eqnarray*}
(f^*,u_h^{m}) & = &\sum_{i,j=1}^N\alpha_{ij}f_{ij}u_h^{mi}\nonumber \\
& = & \sum_{i,j=1}^N\alpha_{ij}\Big{(}2(-m_{ij}+\tau d_{ij})\left(\bu_h^{mj}-\bu^{mi}_h\right)u_h^{mi}\nonumber \\
&& +(-2m_{ij}+\tau d_{ij})\left(u_h^{mj}-u_h^{mi}+u_h^{(m-1)i}-u_h^{(m-1)j}\right)u_h^{mi}\Big{)}\nonumber \\
&& +(-2m_{ij}+\tau d_{ij})\left(u_h^{mi}-u_h^{mj}\right)u_h^{mi}\Big{)}.\nonumber
\end{eqnarray*}

The last term in the last equation can be taken to the left side. We note that all the terms can be written in edge formulation. Bounding $(-2m_{ij}+\tau d_{ij})\left(u_h^{mi}-u_h^{mj}\right)u_h^{mi}$ by below using \cite[Lemma~1]{BJK16}. To bound the rest of terms we use the Cauchy-Schwarz inequality, Eq.~\eqref{eq:inverse_inequality}, and Young's inequality,
\begin{eqnarray}\label{eq:f_star_approx}
\tau(f^{*(m-1)},u^n)&\leq & 2\tau\sum_{E\in \mathcal{E}_h}\alpha_E|m_E+\tau d_E|h_E\|\nabla \bu^m_h\|_{0,E}\|\nabla u^m\|_{0,E}\nonumber \\
&& +\tau\sum_{E\in \mathcal{E}_h}\alpha_E|2m_E+\tau d_E|h_E\nonumber \\
&&\times \|\nabla (u^{(m-1)}_h-u^m_h)\|_{0,E}\|\nabla u^m_h\|_{0,E}\nonumber \\
&\leq & 2\tau\ \underset{E\in \mathcal{E}_h}{\mathrm{max}}(|m_E+\tau d_E|)\left(\sum_{E\in\mathcal{E}_h}h_E\|\nabla \bu^m_h\|_{0,E}^2\right)^{1/2}\nonumber \\
&& \times \left(\sum_{E\in \mathcal{E}_h}h_E\|\nabla u^m_h\|_{0,E}^2\right)^{1/2}+\tau\ \underset{E\in \mathcal{E}_h}{\mathrm{max}}(|2m_E+\tau d_E|)\nonumber \\
&& \times \left(\sum_{E\in \mathcal{E}_h}h_E\|\nabla (u^{m-1}_h-u^m_h)\|_{0,E}^2\right)^{1/2}\nonumber \\
&& \times \left(\sum_{E\in \mathcal{E}_h}h_E\|\nabla u^m\|_{0,E}^2\right)^{1/2}\nonumber \\
&\leq & \tau\ C\|\nabla \bu^m_h\|_0\|\nabla u^m_h\|_0+\tau\ C\|\nabla (u^{m-1}_h-u^m_h)\|_0\|\nabla u^m_h\|_0\nonumber \\
&\leq &\tau\ CC_{\mathrm{inv}}h^{-1}\|\bu^n\|_0\|\nabla u^n\|_0\nonumber \\
&&+\tau\ CC_{\mathrm{inv}}h^{-1}\|\nabla (u^{m-1}_h-u^m_h)\|_0\|\nabla u^m_h\|_0\nonumber \\
&\leq &\frac{4\tau C}{\varepsilon\ h^2}\|\bu^m_h\|_0^2+\frac{4\tau C}{\varepsilon\ h^2}\|\nabla (u^{m-1}_h-u^m_h)\|_0^2+\frac{\tau}{8}\|u^m_h\|_a^2.
\end{eqnarray}

In Eq.~\eqref{eq:f_star_approx} the last term can be taken to the left side and the second term again is bounded by the inequality $\tau\leq Ch^2$ from  the predictor step.

Summing over the time steps \(m =1,\ldots,n\) gives
\begin{multline*}
\|u_h^n\|_0^2+ C_a\tau  \sum_{m =1}^n\|u_h^j\|_{\mathrm{FCT}}^2
+ \sum_{E\in \mathcal{E}_h} |m_E|h_E \|\nabla u_h^{n}\cdot \bt_E\|_{0,E}^2 \\
\leq \|u_h^0\|_0^2 + \sum_{E\in \mathcal{E}_h} |m_E|h_E \|\nabla u_h^0\cdot \bt_E\|_{0,E}^2+\frac{2\tau }{c_0} \sum_{m =1}^n\left(\|f^j\|_0^2+C\|\bu^m_h\|_0^2\right).
\end{multline*}
Finally, ignoring the terms with positive contribution, bounding the last term from above by the predictor step stability condition, and the application of the local trace inequality gives the statement of the theorem. 	
\end{proof}

\section{Finite Element Error Analysis}\label{sec:error_estimates}
This section details the error analysis of the linear and nonlinear FEM-FCT schemes. Sufficient regualrity condition for the solution $u$ of Eq.~\eqref{eq:time_cdr_eqn} is assumed for a  priori analysis. The analysis of both schemes starts by decomposing the error into an interpolation error and the difference of interpolation and the solution i.e.,
\[
u_h^n-u(t_n)=\left(u_h^n-\Pi_hu(t_n)\right)+\left(\Pi_hu(t_n)-u(t_n)\right),
\]
where $\Pi_h$ is a stable interpolation operator.

The following error estimates are used to get the estimates for the terms involving interpolation error 
\begin{equation}\label{eq:interpolation_estimate}
\|u-\Pi_hu\|_0+h\|u-\Pi_hu\|_1\leq Ch^{2}\|u\|_2
\end{equation}
for \(u\in V\cap H^2(\Omega)\) (see \cite{BS08}). 

In the analysis below, the following semi-norm property will be used
\begin{align}\label{eq:semi-norm}
|d_h^D(w;z,v)|^2 \le d_h^D(w;z,z)d_h^D(w;v,v),
\end{align}
which is valid for $d_h^M(\cdot; \cdot, \cdot)$ as well see \cite[Eq.~(41)]{BJK16}.

In order to get the bounds for the term \(d_h^D(\cdot;\cdot,\cdot)\), we will use the following lemma, for proof we refer to  \cite[Lemma 16]{BJK16}.
\begin{lemma}\label{lem:estim-dhd}
Let the matrix \(D\) be defined by Eq.~\eqref{eq:matD}. Then, there exists a constant \(C\) that does not depend on \(\tau\), \(h\) 
and the data of Eq.~\eqref{eq:time_cdr_eqn} such that
\begin{eqnarray}\label{eq:estim-dhd}
\lefteqn{d_h^D(w_h,\Pi_h u, \Pi_h u)}\nonumber \\
& \le & C (\varepsilon + \|\bb\|_{0,\infty,\Omega} h+\|c\|_{0,\infty,\Omega} h^2) |\Pi_h u|_1^2 \qquad \forall w_h \in V_h, \; u\in C(\overline{\Omega}).
\end{eqnarray}
\end{lemma}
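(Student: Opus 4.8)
The plan is to reduce the quadratic form $d_h^D(w_h;\Pi_h u,\Pi_h u)$ to a weighted edge sum, bound the edge weights $|d_E|$ by the off-diagonal entries of the stiffness matrix $A$, and then convert the edge sum back into the $H^1$ semi-norm, arranging the powers of $h$ so that they cancel. First I would invoke the edge formulation Eq.~\eqref{eq:d_h_D_edge_formulation}. Since the limiters satisfy $\alpha_E=\alpha_{ij}\in[0,1]$, we have $0\le 1-\alpha_E\le 1$, so the limiter factor can simply be dropped; combined with Lemma~\ref{lem:tang_derivative} (which replaces $\|\nabla\Pi_h u\cdot\bt_E\|_{0,E}$ by $\|\nabla\Pi_h u\|_{0,E}$) this gives
\[
d_h^D(w_h;\Pi_h u,\Pi_h u)\le \Big(\max_{E\in\mathcal{E}_h}|d_E|\Big)\sum_{E\in\mathcal{E}_h}h_E\|\nabla\Pi_h u\|_{0,E}^2 .
\]
This step is essentially algebraic and removes the dependence on $w_h$ from the right-hand side, which is precisely why the final bound is uniform in $w_h$.

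Second, I would estimate $|d_E|$. By the definition Eq.~\eqref{eq:matD}, $|d_{ij}|=\max\{a_{ij},0,a_{ji}\}\le\max\{|a_{ij}|,|a_{ji}|\}$, so it suffices to bound a generic off-diagonal entry of $A$. Writing $a_{ij}$ through the $\mathbb{P}_1$ nodal basis $\{\varphi_i\}$ and splitting it into its diffusion, convection and reaction contributions, I would use the standard bounds $\|\nabla\varphi_i\|_{0,\infty}\le Ch^{-1}$, $\|\varphi_i\|_{0,\infty}\le 1$ together with $|\mathrm{supp}\,\varphi_i\cap\mathrm{supp}\,\varphi_j|\le Ch^d$ to obtain
\[
|a_{ij}|\le C\big(\varepsilon h^{d-2}+\|\bb\|_{0,\infty,\Omega}h^{d-1}+\|c\|_{0,\infty,\Omega}h^{d}\big)=Ch^{d-2}\big(\varepsilon+\|\bb\|_{0,\infty,\Omega}h+\|c\|_{0,\infty,\Omega}h^{2}\big),
\]
and the same bound for $|a_{ji}|$, hence for $|d_E|$. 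Every constant here is a pure geometric/interpolation constant, independent of $\tau$, $h$ and the data, as required.

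Third, I would convert the edge sum into the full semi-norm. Because $\Pi_h u\in\mathbb{P}_1$, its gradient is constant on each element $K$, so on an edge $E\subset\partial K$ one has $\|\nabla\Pi_h u\|_{0,E}^2=|E|\,|\nabla\Pi_h u|_K|^2$ while $|\Pi_h u|_{1,K}^2=|K|\,|\nabla\Pi_h u|_K|^2$; using quasi-uniformity ($h_E\sim h_K$, $|E|\sim h^{d-1}$, $|K|\sim h^{d}$) and summing over the $O(1)$ edges meeting each element yields
\[
\sum_{E\in\mathcal{E}_h}h_E\|\nabla\Pi_h u\|_{0,E}^2\le Ch^{2-d}\,|\Pi_h u|_1^2 .
\]
Multiplying the bound on $\max_E|d_E|$ by this estimate, the factors $h^{d-2}$ and $h^{2-d}$ cancel and the claimed inequality follows.

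The main obstacle I anticipate is the bookkeeping of the $h$-powers: the cancellation between the $h^{d-2}$ scaling of $|d_E|$ and the $h^{2-d}$ scaling of the edge sum is exactly what makes the three weights $\varepsilon$, $\|\bb\|_{0,\infty,\Omega}h$, $\|c\|_{0,\infty,\Omega}h^2$ appear at the correct orders, so a slip in either exponent would destroy the result. A secondary subtlety is that the trace inequality Eq.~\eqref{eq:trace_inequality} is stated for faces $E\subset\partial K$, whereas the edge formulation sums over graph edges; in $d=2$ these coincide, which is the setting of the paper ($\mathbb{P}_1$ on triangles), so the constancy-of-gradient argument applies directly, but I would make this identification explicit rather than silently invoking the face trace inequality.
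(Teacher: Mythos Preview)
The paper does not give its own proof of this lemma; it simply cites \cite[Lemma~16]{BJK16}. Your argument is correct and is essentially the proof found there: pass to the edge representation Eq.~\eqref{eq:d_h_D_edge_formulation}, discard the factor $1-\alpha_E\le 1$, bound $|d_E|\le\max\{|a_{ij}|,|a_{ji}|\}$ term by term via the diffusion, convection and reaction contributions to the stiffness matrix, and use that $\nabla\Pi_h u$ is elementwise constant to convert the weighted edge sum back into $|\Pi_h u|_1^2$. The only slip is the parenthetical scaling $|E|\sim h^{d-1}$: in the edge formulation $E$ is always a one-dimensional graph edge, so $|E|=h_E\sim h$ regardless of $d$, and it is this scaling that actually yields the factor $h^{2-d}$ you subsequently use to cancel the $h^{d-2}$ from $|d_E|$. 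In the two-dimensional setting of the paper both readings coincide, so the conclusion is unaffected.
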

This lemma also holds true for time dependent $\bb$ and $c$, as we assume them to be bounded in time as well. If we follow the same procedure as in the proof of \cite[Lemma 16]{BJK16}, we can get the following estimate.
\begin{lemma}\label{lem:estim-dhm}
Let the matrix \(M_L\) be defined by Eq.~\eqref{eq:matML} and let Eq.~\eqref{eq:mij} hold. Then, there exists a constant \(C\) that does not 
depend on \(\tau\), \(h\) and the data of Eq.~\eqref{eq:time_cdr_eqn} such that
\begin{align}\label{eq:estim-dhm}
d_h^M(w_h,\Pi_h u, \Pi_h v) \le C h |\Pi_h u_t|_1^2 + C  \|\Pi_h v\|_0^2\qquad \forall w_h \in V_h, \; u,v\in C(\overline{\Omega}).
\end{align}
\end{lemma}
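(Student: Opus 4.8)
The plan is to adapt the argument of \cite[Lemma~16]{BJK16} (which underlies Lemma~\ref{lem:estim-dhd}) to the mass-type form, working throughout with the edge representation Eq.~\eqref{eq:d_h_edge_formulation}. Since the limiter enters only through the weight \(1-\alpha_E(w_h)\), the first slot \(w_h\) plays no active role: I would begin by using \(0\le 1-\alpha_E(w_h)\le 1\) to discard it, which reduces the claim to estimating
\[
\sum_{E\in\mathcal{E}_h}|m_E|\,h_E\bigl(\nabla(\Pi_h u)'\cdot\bt_E,\ \nabla\Pi_h v\cdot\bt_E\bigr)_E,
\]
where \((\Pi_h u)'=\Pi_h u_t\) because the spatial interpolation \(\Pi_h\) commutes with the time derivative. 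Applying the Cauchy--Schwarz inequality on each edge inner product and then Lemma~\ref{lem:tang_derivative} to drop the tangential projections gives the upper bound \(\sum_{E}|m_E|\,h_E\,\|\nabla(\Pi_h u)'\|_{0,E}\,\|\nabla\Pi_h v\|_{0,E}\).

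Next I would split this product with Young's inequality into a term carrying \(\nabla(\Pi_h u)'\) and a term carrying \(\nabla\Pi_h v\), and then convert each edge norm to an element norm using the trace inequality Eq.~\eqref{eq:trace_inequality}. The crucial simplification is that for \(\mathbb{P}_1\) elements both \(\nabla(\Pi_h u)'\) and \(\nabla\Pi_h v\) are piecewise constant, so the second-derivative contribution in Eq.~\eqref{eq:trace_inequality} vanishes and one is left with \(\|\nabla(\cdot)\|_{0,E}^2\le C_{\mathrm{T_1}}^2\,h_K^{-1}\|\nabla(\cdot)\|_{0,K}^2\). Here the two factors are treated \emph{asymmetrically}: on the \(\Pi_h v\) term I additionally invoke the inverse inequality Eq.~\eqref{eq:inverse_inequality}, \(\|\nabla\Pi_h v\|_{0,K}\le\cin h^{-1}\|\Pi_h v\|_{0,K}\), to trade the element gradient for an \(L^2\) norm, whereas on the time-derivative term I keep the \(H^1\) seminorm \(|\Pi_h u_t|_{1,K}\).

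Finally I would collect the powers of \(h\). Inserting the mass-matrix scaling \(|m_E|\le C h^d\) from Eq.~\eqref{eq:mij}, using \(h_E h_K^{-1}\le C\) on a quasi-uniform mesh, and summing over the \(O(1)\) edges per simplex, the time-derivative part reduces to \(C h^{d}\,|\Pi_h u_t|_1^2\) and the \(\Pi_h v\) part to \(C h^{d-2}\,\|\Pi_h v\|_0^2\). In the triangular (\(d=2\)) setting of the analysis these become \(C h^{2}|\Pi_h u_t|_1^2\le C h\,|\Pi_h u_t|_1^2\) and \(C\|\Pi_h v\|_0^2\), which is exactly Eq.~\eqref{eq:estim-dhm}. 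The main obstacle I expect is the bookkeeping of the \(h\)-exponents: one must check that the single inverse-inequality factor \(h^{-1}\) applied only to \(\Pi_h v\) is precisely compensated by the trace factor \(h_K^{-1/2}\) and the mass scaling \(h^{d}\), so that the \(v\)-term ends up free of negative powers of \(h\) while the \(u_t\)-term retains a positive power. Keeping the two arguments in different norms (semi-\(H^1\) versus \(L^2\)) is what forces this asymmetric use of the inverse inequality and is the only genuinely delicate point.
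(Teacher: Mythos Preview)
Your proposal is correct and is exactly the adaptation of \cite[Lemma~16]{BJK16} that the paper indicates, since the paper gives no detailed proof here beyond the remark ``If we follow the same procedure as in the proof of \cite[Lemma 16]{BJK16}''. Your edge-formulation, Cauchy--Schwarz/Young splitting, trace inequality plus the \(\mathbb{P}_1\) simplification, and the asymmetric use of the inverse inequality on the \(\Pi_h v\) factor reproduce precisely that argument; in fact your power-counting yields \(Ch^{2}|\Pi_h u_t|_1^2\) rather than \(Ch\), which is a slightly sharper constant (for \(h\le 1\)) and is the form the paper actually uses when it applies the lemma in Theorems~\ref{thm:nl_err_est} and~\ref{thm:existence_linear}.
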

For simplicity of presentation, let us denote \[\Pi_h^n u=\Pi_hu(t_n) \qquad \mbox{and} \qquad e_h^n=u_h^n-\Pi_h^n u.\] 
The a priori error analysis below assumes
\begin{align}\label{eq:regularity}
u,u_t\in L^{\infty} (0,T;H^2),\ u_{tt}\in L^2(0,T;H^1)
\end{align}
for the solution \(u\) of the Eq.~\eqref{eq:time_cdr_eqn}.
\begin{theorem}\label{thm:nl_err_est}
	Let \(\bb\in L^{\infty}(0,T;(L^{\infty})^d),\ \nabla\cdot \bb,\;c \in L^{\infty}(0,T;L^{\infty})\) for the coefficients in
	Eq.~\eqref{eq:wf}. Further, assume that the solution \(u\) satisfies the regularity assumption~Eq.~\eqref{eq:regularity}. Then, the error 
	\(u_h^n-u(t_n)\) satisfies
	\begin{eqnarray}
\lefteqn{\|u_h^n-u(t_n)\|_0^2+\tau \sum_{m=1}^n\|u_h^m -u(t_m )\|_{\mathrm{FCT}}^2} \nonumber \\
	&\leq &C\Big[ h^4+\tau^2 + (1+\varepsilon) h^2
	 +\varepsilon+\|\bb\|_{\infty}h		+\|c\|_{\infty}h^2 \Big]
	\end{eqnarray}
	where \(C\) is a constant that depends on \(u,\;u'\), and \(u''\).
\end{theorem}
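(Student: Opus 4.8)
The plan is to run the classical a priori argument on the splitting already fixed before the statement, $u_h^n-u(t_n)=e_h^n+\eta^n$ with $e_h^n=u_h^n-\Pi_h^nu$ and $\eta^n=\Pi_h^nu-u(t_n)$. The interpolation part $\eta^n$ is disposed of immediately: \eqref{eq:interpolation_estimate} together with the regularity \eqref{eq:regularity} bounds $\|\eta^n\|_0$, $|\eta^n|_1$, and the time difference $(\eta^n-\eta^{n-1})/\tau$, and contributes the $h^4$ and $(1+\varepsilon)h^2$ parts of the final estimate through the triangle inequality. All the real work is the bound on the discrete error $e_h^n\in V_h$.

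First I would set up the error equation by evaluating \eqref{eq:wf} at $t_n$, testing with $v_h\in V_h$, and subtracting the fully discrete scheme \eqref{eq:fdp_nl} divided by $\tau$. The key structural observation is that $d_h^D(w;\cdot,\cdot)$ and $d_h^M(w;\cdot,\cdot)$ are bilinear in their last two slots once the limiter argument $w$ is frozen; I therefore keep $w=u_h^n$ everywhere and split only the genuinely linear arguments via $u_h^n=e_h^n+\Pi_h^nu$. This avoids having to differentiate the nonlinear map $u_h^n\mapsto\alpha_{ij}(u_h^n)$ and lets me use the coercivity \eqref{eq:coercive} and the semi-norm inequality \eqref{eq:semi-norm} with $w=u_h^n$. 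Testing the error equation with $v_h=e_h^n$ and multiplying by $\tau$, the left-hand side carries $\tfrac12(\|e_h^n\|_0^2-\|e_h^{n-1}\|_0^2+\|e_h^n-e_h^{n-1}\|_0^2)$, the coercive term $\tau C_{\mathrm{FCT}}\|e_h^n\|_{\mathrm{FCT}}^2$, and the mass-stabilization term. Exactly as in the stability proof of Theorem~\ref{thm:stab-nl_fem_fct}, I would write $1-\alpha_E=1-\alpha_E$ and keep the fully consistent mass part (whose coefficients $m_{ij}$ are $n$-independent, so it telescopes cleanly) on the left, rewritten through the identity \eqref{eq:expression_for_d_h1} so that summation in $n$ leaves only the nonnegative edge energy of $e_h^n$ plus initial data; the $\alpha_E$-weighted remainder moves to the right.

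Next I would bound the right-hand side, which consists of four consistency contributions. The backward-Euler truncation $(u'(t_n)-(u(t_n)-u(t_{n-1}))/\tau,\,e_h^n)$ is controlled by the integral Taylor remainder and $u_{tt}\in L^2(0,T;H^1)$, giving the $\tau^2$ term after Young's inequality and summation. The interpolation-in-time term $((\eta^n-\eta^{n-1})/\tau,\,e_h^n)=\bigl(\tau^{-1}\!\int_{t_{n-1}}^{t_n}(\Pi_h-I)u_t\,dt,\,e_h^n\bigr)$ is handled by \eqref{eq:interpolation_estimate} applied to $u_t$, giving $O(h^4)$. The Galerkin consistency $\tau\,a_h(\eta^n,e_h^n)$ splits into diffusion, convection, and reaction, and interpolation plus Young's inequality produce the $(1+\varepsilon)h^2$ and $h^4$ contributions, the arising $\varepsilon|e_h^n|_1^2$ and $c_0\|e_h^n\|_0^2$ fractions being absorbed into $\|e_h^n\|_{\mathrm{FCT}}^2$. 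The genuinely delicate contributions are the stabilization consistency terms $\tau\,d_h^D(u_h^n;\Pi_h^nu,e_h^n)$ and $\tau\,d_h^M(u_h^n;(\Pi_h^nu-\Pi_h^{n-1}u)/\tau,e_h^n)$: since the FCT stabilization is not consistent these do not vanish, and I would bound them by the semi-norm Cauchy-Schwarz \eqref{eq:semi-norm} followed by Lemma~\ref{lem:estim-dhd} and Lemma~\ref{lem:estim-dhm} (using the edge representation \eqref{eq:d_h_edge_formulation} for the latter). The first factor then delivers exactly the suboptimal $C(\varepsilon+\|\bb\|_{\infty}h+\|c\|_{\infty}h^2)|\Pi_hu|_1^2$ and the $O(h)$ mass remainder, while the second factor $d_h^D(u_h^n;e_h^n,e_h^n)$ (respectively the accompanying $\|e_h^n\|_0^2$) is absorbed into the left-hand side.

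Finally I would choose the Young parameters so that all $\|e_h^n\|_{\mathrm{FCT}}$ and $\|e_h^n\|_0$ remainders are absorbed into $\tau C_{\mathrm{FCT}}\|e_h^n\|_{\mathrm{FCT}}^2$ and the time term, exactly as in Theorem~\ref{thm:stab-nl_fem_fct}, so that no discrete Gronwall is required, and then sum over $m=1,\dots,n$. The mass-edge terms telescope and leave only the $m=0$ data, which vanishes for the natural choice $e_h^0=0$ (or is $O(h^2)$ for a general interpolated initial value); combining the resulting bound on $\|e_h^n\|_0^2+\tau\sum_m\|e_h^m\|_{\mathrm{FCT}}^2$ with the interpolation estimate for $\eta^n$ gives the claim. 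The hard part is the stabilization consistency: because $d_h^D$ and especially the time-derivative form $d_h^M$ cannot be made to vanish on $\Pi_h^nu$, they must be routed through \eqref{eq:semi-norm} and Lemmas~\ref{lem:estim-dhd} and~\ref{lem:estim-dhm}, and this is precisely what caps the convergence at the suboptimal rate in the FCT norm and forces the $\varepsilon$, $\|\bb\|_{\infty}h$, and $\|c\|_{\infty}h^2$ terms; it is also where the frozen-limiter bilinearity, together with the smallness assumption \eqref{eq:assumption_limiter} inherited from the nonlinear stability result, must be used to retain control despite the nonlinear dependence of $\alpha_{ij}$ on $u_h^n$.
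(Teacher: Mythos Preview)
Your proposal is correct and follows essentially the same approach as the paper's proof: freeze the limiter argument at $u_h^n$, test the error equation with $e_h^m$, invoke the stability machinery of Theorem~\ref{thm:stab-nl_fem_fct} on the discrete-error part, and handle the four consistency terms (time truncation, interpolation, Galerkin consistency $a_h(\eta,e_h)$, and stabilization consistency $d_h^D,\,d_h^M$) via the interpolation estimate, Taylor remainder, the semi-norm inequality \eqref{eq:semi-norm}, and Lemmas~\ref{lem:estim-dhd}--\ref{lem:estim-dhm}. The only cosmetic difference is that the paper groups the time truncation and interpolation-in-time contributions together into a single term $N_1^m$, whereas you separate the backward-Euler truncation from the $(\Pi_h-I)u_t$ piece; either organization leads to the same bounds.
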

\begin{proof}
	From the stability result of Theorem~\ref{thm:stab-nl_fem_fct}, we have for \(v_h=e_h^m \)
	\begin{eqnarray*}
	\lefteqn{\left(e_h^m -e_h^{m -1},e_h^m \right)
	+\tau\left[ a_h(e_h^m ,e_h^m )+d_h^D(u_h;e_h^m ,e_h^m )+d_h^m (u_h;e_h,e_h^m )\right]}\nonumber \\
	 &= &\tau (f^m ,e_h^m )-\left(\Pi_h^m u-\Pi_h^{m-1}u,e_h^m \right)\nonumber \\
	 &&-	 \tau \left[ a_h(\Pi_h^m u,e_h^m ) +d_h^D(u_h^m ;\Pi_h^m u,e_h^m )+d_h^m (u_h^m ;\Pi_hu,e_h^m )
	 \right].
	\end{eqnarray*}
	Applying the stability techniques to the above estimate, one gets
	\begin{eqnarray}\label{eq:err_nl_main}
		\|e_h^n\|_0^2+C\tau \sum_{m =1}^n\|e_h^m \|_{\mathrm{FCT}}^2& \leq &\|e_h^0\|_0^2
		+\frac{1}{C_{\mathrm{T}_1}^2}\|\nabla e_h^0\|_0^2
		+\sum_{m =1}^n(N_1^m ,e_h^m ) \nonumber \\
		&& +\sum_{m =1}^n(N_2^m ,e_h^m )-\tau\sum_{m =1}^n\Big(d_h^D(u_h^m ;\Pi_h^m u,e_h^m )\nonumber \\
        && +d_h^M(u_h^m ;\Pi_hu,e_h^m )\Big),
	\end{eqnarray}
	where \(N_1^m \) and \(N_2^m \) are given by
	\begin{eqnarray*}
		\left(N_1^m ,e_h^m \right) & = & \tau \left(u'(t_m )-\frac{(\Pi_h^m u-\Pi_h^{m -1}u)}{\tau},e_h^m \right)\nonumber \\
		&& +\left(\bb\cdot \nabla \left(u(t_m )-\Pi_h^m u\right),e_h^m \right)
		+\left(c\left(u(t_m )-\Pi_h^m u\right),e_h^m \right), \nonumber \\
		\left(N_2^m ,e_h^m \right)
		& = &\tau \varepsilon\left( \nabla \left(u(t_m )-\Pi_h^m u\right),
		\nabla e_h^m \right).
	\end{eqnarray*}
The estimates for the term \(N_1\) can also be found in \cite{JN11}. For completeness, we detail the estimates here. The application of the Cauchy-Schwarz inequality, Young's inequality, 
the triangular inequality and the interpolation error estimates 
Eq.~\eqref{eq:interpolation_estimate} gives
\begin{eqnarray*}
|(N_1^m ,e_h^m )| &\le & \frac{\tau}{2c_0} \|N_1^m \|_0^2 + \frac{\tau c_0}{2} \|e_h^m \|_0^2 \nonumber\\
&\le &C h^2 \left[\|u'(t_m )\|_{2}^2+\|c\|^2_{\infty}\|u(t_m )\|_{2}^2
+\|\bb\|_{\infty}^2\|u(t_m )\|_{2}^2\right]\nonumber \\
&& +C \tau \left\|\Pi_h^m  u(t_m ) - \frac{\Pi_h^m u - \Pi_h^{m -1} u}{\tau} \right\|_0^2
+ \frac{\tau c_0}{2} \|e_h^m \|_0^2.
\end{eqnarray*}
The term with the backward difference can be estimated using the Taylor formula with integral remainder form, the property that the time derivative and the interpolation 
\(\Pi_h\) commute, the Cauchy-Schwarz ineqaulity and the stability of \(\Pi_h\), one gets
\begin{eqnarray*}
\lefteqn{\left\|\Pi_h^m  u(t_m ) - \frac{\Pi_h^m u - \Pi_h^{m -1} u}{\tau} \right\|_0^2 }\nonumber \\
&\le & \frac{1}{\tau^2} \left\| \int_{t_{m -1}}^{t_m } (t-t_{m -1}) \Pi_h^m  u'' \right\|_0^2 \\
&\le &\frac{1}{\tau^2} \Bigg( \left(\int_{t_{m -1}}^{t_m } (t-t_{m -1})^2 dt \right)^{1/2}\left( \int_{t_{m -1}}^{t_m } \|\Pi_h^m  u''\|_0^2\right)\Bigg) \\
&\le &C \tau \int_{t_{m -1}}^{t_m } \left\|\Pi_h^m  u''\right\|_1^2 = C \tau \left\| \Pi_h^m  u''\right\|_{L^2(t_{m -1}, t_m ; H^1)}^2.
\end{eqnarray*}
It follows that
\begin{eqnarray*}
\lefteqn{|(N_1^m ,e_h^m )| }\nonumber \\
&\le & C h^2 \left[\|u'(t_m )\|_{2}^2+\|c\|^2_{\infty}\|u(t_m )\|_{2}^2
+\|\bb\|_{\infty}^2\|u(t_m )\|_{2}^2\right]\nonumber \\
&& +C \tau^2 \left\| \Pi_h^m  u''\right\|_{L^2(t_{m -1}, t_m ; H^1)}^2
+ \frac{\tau c_0}{2} \|e_h^m \|_0^2.\nonumber
\end{eqnarray*}
For \(N_2^m \), the application of Cauchy-Schwarz inequality, Young's inequality and Eq.~\eqref{eq:interpolation_estimate} gives
\begin{align*}
\left(N_2^m ,e_h^m \right) \le \frac{\varepsilon \tau h^2 }{2} \|u(t_m )\|_2^2 + \frac{\varepsilon \tau  }{2}\;|e_h^m |_1^2.
\end{align*}
For the fifth term on the right-hand side of Eq.~\eqref{eq:err_nl_main}, the seminorm property Eq.~\eqref{eq:semi-norm}, Young's inequality
and Lemma~\ref{lem:estim-dhd} give
\begin{align*}
\tau |d_h^D(u_h^m ;\Pi_h^m  u, e_h^m ) |& \le \frac\tau2 d_h^D(u_h^m ;\Pi_h^m  u, \Pi_h^m u) +\frac\tau2 d_h^D(u_h^m ;e_h^m  , e_h^m ) \\
&\le C\tau \left(\varepsilon + \|\bb\|_\infty h + \|c\|_\infty h^2 \right) |\Pi_h^m  u|_1^2 +\frac\tau2 d_h^D(u_h^m ;e_h^m  , e_h^m ) .
\end{align*}
Similarly, for the sixth term on the right-hand side of Eq.~\eqref{eq:err_nl_main} using Lemma~\ref{lem:estim-dhm}, we obtain 
\begin{align*}
\tau |d_h^M(u_h^m ;\Pi_h^m  u, e_h^m ) | \le C\tau h^2 |\Pi_hu'(t_m )|_1^2 + \frac{C\tau}{2}   \|e_h^m \|_0^2.
\end{align*}
Collecting all the estimates in Eq.~\eqref{eq:err_nl_main}, absorbing the similar terms to the left-hand side, and using \(e_h^0 = 0\), we get
\begin{eqnarray*}
\lefteqn{\|e_h^n\|_0^2+C\tau \sum_{m =1}^n\|e_h^m \|_{\mathrm{FCT}}^2}\nonumber \\ 
&\le & \sum_{m=1}^n\Big[ Ch^2 \Big[  \|u'(t_m )\|_2^2 
  + \left( \|\bb\|_{0,\infty}^2+\|c\|_{0,\infty}^2\right)\|u(t_m )\|_2^2
\Big]\nonumber \\
&& + C \tau^2 \|u''\|_{L^2(t_{m -1},t_m ;H^1)} + C \tau h^2 |u'(t_m )|_1^2\Big].
\end{eqnarray*}
The statement of the theorem then follows by applying the triangular inequality and the interpolation error estimates.
\end{proof}
In the next theorem, an error estimate for the linear FEM-FCT scheme is derived.
\begin{theorem}\label{thm:existence_linear}
Let \(\bb \in L^{\infty}(0,T;(L^{\infty})^d),\ \nabla\cdot \bb,\;c	\in L^{\infty}(0,T;L^{\infty})\) for the coefficients in
	Eq.~\eqref{eq:wf}. Further, assume that the solution \(u\) satisfies the regularity 
	assumption~Eq.~\eqref{eq:regularity}. Then, the error \(u_h^n-u(t_n)\) satisfies
	\begin{eqnarray}
\lefteqn{\|u_h^n-u(t_n)\|_0^2+\tau \sum_{m =1}^n\|u_h^m -u(t_m )\|_{\mathrm{FCT}}^2}\nonumber \\
	&\leq & C\Big[ h^4+\tau^2 +(1+\varepsilon) h^2
	 +(1+\tau)\left(\varepsilon+\|\bb\|_{\infty}h	+\|c\|_{\infty}h^2\right) \Big]
	\end{eqnarray}
	where \(C\) is a constant that depends on the \(u,\;u'\), and \(u''\).
\end{theorem}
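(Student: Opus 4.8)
The plan is to mirror the proof of Theorem~\ref{thm:nl_err_est} line by line and to isolate the one genuinely new ingredient, the corrector flux $f^*$. First I would decompose $u_h^n - u(t_n) = e_h^n + (\Pi_h^n u - u(t_n))$ and derive the error equation by subtracting the weak form \eqref{eq:wf} at $t_m$, tested with $e_h^m$, from the fully discrete linear scheme \eqref{eq:fdp_lp} with $v_h = e_h^m$. Because \eqref{eq:fdp_lp} differs from the nonlinear problem \eqref{eq:fdp_nl} only by the source term $\tau(f^{*(m-1)}, e_h^m)$ on the right, the resulting identity is exactly \eqref{eq:err_nl_main} with $\bar{d}_h^D$, $\bar{d}_h^M$ in place of their solution-dependent counterparts, augmented by this extra term. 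Applying the telescoping identity $(a-b,a)=\tfrac12(\|a\|_0^2-\|b\|_0^2+\|a-b\|_0^2)$ and the coercivity \eqref{eq:coercive} as in the stability proof of Theorem~\ref{thm:stab-lin_fem_fct} casts everything into a summable form.

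For the consistency terms $N_1^m$, $N_2^m$ and for the interpolation contributions $\tau d_h^D(u_h^m;\Pi_h^m u, e_h^m)$ and $\tau d_h^M(u_h^m;\Pi_h u, e_h^m)$ I would reuse the estimates from Theorem~\ref{thm:nl_err_est} verbatim: Cauchy-Schwarz, Young's inequality, the Taylor-with-integral-remainder bound for the backward difference, and \eqref{eq:interpolation_estimate} produce the $h^4+\tau^2+(1+\varepsilon)h^2$ contributions, while the seminorm property \eqref{eq:semi-norm} together with Lemmas~\ref{lem:estim-dhd} and \ref{lem:estim-dhm} produce the $\varepsilon+\|\bb\|_\infty h+\|c\|_\infty h^2$ contribution, the diagonal pieces $\tfrac\tau2 d_h^D(u_h^m;e_h^m,e_h^m)$ and $\|e_h^m\|_0^2$ being absorbed into the coercive left-hand side. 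This recovers every summand of the claimed bound except the factor $\tau(\varepsilon+\|\bb\|_\infty h+\|c\|_\infty h^2)$, which must originate from the corrector.

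The crux is the new term $\tau(f^{*(m-1)}, e_h^m)$. I would expand $f^*$ over edges exactly as in the derivation of \eqref{eq:f_star_approx}, now pairing against $e_h^m$; after adding and subtracting the flux built from $(-2m_E+\tau d_E)$ as done there, the dissipative part is bounded below by \cite[Lemma~1]{BJK16} and the remainder is controlled by Cauchy-Schwarz, the inverse inequality \eqref{eq:inverse_inequality} and Young's inequality. A fraction $\tfrac\tau8\|e_h^m\|_a^2$ is absorbed on the coercive left, the increment term is reduced using the predictor CFL condition \eqref{eq:assumption_space_time}, namely $\tau\leq Ch^2$, together with the telescoping edge-mass sum, and the predictor norm is fed into the predictor stability estimate \eqref{eq:stability_FEM_FCT_fe} of Theorem~\ref{thm:stab_fe_fem_fct}. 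The essential accounting is that each edge flux in $f^*$ carries a coefficient $\alpha_E|m_E+\tau d_E|h_E$ of the same size as the stabilization forms $\bar{d}_h^D$, $\bar{d}_h^M$, so that the whole term scales like $\tau$ times $(\varepsilon+\|\bb\|_\infty h+\|c\|_\infty h^2)$, precisely the summand that separates this theorem from Theorem~\ref{thm:nl_err_est}.

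I expect the main obstacle to be exactly this control of $\tau(f^{*(m-1)}, e_h^m)$: the stability-type bound \eqref{eq:f_star_approx} only yields boundedness of the corrector, whereas here I must extract the correct convergence order from it. The difficulty is to apply the predictor stability \eqref{eq:stability_FEM_FCT_fe} to the predictor error rather than to the raw predictor, so that $\|\bu_h^m\|_0$ is replaced by an interpolation-error quantity of order $h^2$ instead of an $O(1)$ norm, and then to invoke $\tau\leq Ch^2$ at the right moment so the $h^{-2}$ factors from the inverse inequality cancel without degrading the rate. Keeping the limiter weights $\alpha_E\in[0,1]$ and the edge coefficients $|m_E+\tau d_E|$ explicit throughout is what ties the corrector to the stabilization scale. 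Once this term is bounded, summing over $m=1,\dots,n$, estimating the edge-mass terms by the trace inequality \eqref{eq:trace_inequality}, using $e_h^0=0$, and closing with the triangle inequality and \eqref{eq:interpolation_estimate} yields the stated estimate.
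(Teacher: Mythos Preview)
Your overall architecture---error decomposition, the identity \eqref{eq:err_nl_main} with the extra source $\tau(f^{*(m-1)},e_h^m)$, and the verbatim reuse of the $N_1^m$, $N_2^m$, $d_h^D$, $d_h^M$ estimates from Theorem~\ref{thm:nl_err_est}---matches the paper exactly. The divergence is entirely in how you propose to handle $(f^{*(m-1)},e_h^m)$.

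You plan to replay the stability argument \eqref{eq:f_star_approx}: add and subtract the flux with coefficient $(-2m_E+\tau d_E)$, sign the ``dissipative'' piece via \cite[Lemma~1]{BJK16}, push the remainder through inverse estimates and the CFL restriction $\tau\le Ch^2$, and then upgrade the resulting predictor norm to a predictor \emph{error} norm. Two things go wrong here. First, the add--subtract trick in \eqref{eq:f_star_approx} produced a diagonal form only because the test function was $u_h^m$ itself; with test function $e_h^m$ the term $(-2m_{ij}+\tau d_{ij})(u_{hi}^m-u_{hj}^m)e_{hi}^m$ is an off-diagonal pairing and is not sign-definite, so there is nothing to ``bound below'' and move to the left. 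Second, even if you repair this, you have manufactured the very obstacle you identify: you now need a predictor-step error estimate that the paper never states or proves, and you have dragged the CFL condition into an error bound whose statement does not require it.

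The paper sidesteps all of this. It writes $(f^{*(m-1)},e_h^m)$ directly in edge form, splits by the triangle inequality into a $|\tau d_E - m_E|$ piece acting on $\nu^{m-1/2}$ and a $|d_E|$ piece acting on $u_h^{m-1}$, and then applies Lemmas~\ref{lem:estim-dhd} and \ref{lem:estim-dhm} to these edge sums exactly as it did for $d_h^D(\Pi_h u,e_h^m)$ and $d_h^M(\Pi_h u,e_h^m)$. The point is that the smallness is already contained in the edge coefficients: Lemma~\ref{lem:estim-dhd} converts the $|d_E|$-weighted edge sum into $(\varepsilon+\|\bb\|_\infty h+\|c\|_\infty h^2)$ times an $O(1)$ $H^1$ seminorm, and Lemma~\ref{lem:estim-dhm} converts the $|m_E|$-weighted sum into $h^2$ times an $O(1)$ seminorm plus an absorbable $\|e_h^m\|_0^2$. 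No inverse inequality, no CFL, no predictor error analysis is needed; the $(1+\tau)$ prefactor in the final bound comes simply from the $\tau|d_E|$ portion of $|\tau d_E - m_E|$. Your route could perhaps be salvaged, but the paper's is both shorter and structurally cleaner because it recognises that the corrector flux has the same edge structure as the stabilisation forms and therefore yields to the same two lemmas.
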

\begin{proof}
The error analysis for the error in the linear FEM-FCM starts by taking \(v_h=e_h^m \) in the stability 
Theorem~\ref{thm:stab-nl_fem_fct}, using \(e_h^0=0\),
and applying similar estimates to get
\begin{eqnarray}\label{eq:lin_er_es_t1}
\lefteqn{\|e_h^n\|_0^2+\tau C\sum_{m =1}^n\|e_h^m \|_{\mathrm{FCT}}^2}\nonumber\\
&\leq&  \tau \sum_{m =1}^n(f^{*(m -1)},e_h^n)+\sum_{m =1}^n\left(N_1^m +N_2^m ,e_h^m \right)\nonumber \\
&&-\tau \sum_{m =1}^nd_h^D(\Pi_h^m u,e_h^m )-\tau \sum_{m =1}^Nd_h^M(\Pi_hu,e_h^m ).
\end{eqnarray}
The second term on the right-hand side is estimated in Theorem~\ref{thm:nl_err_est}. Also, the bounds for the last two terms can be derived by using the same arguments as in Theorem~\ref{thm:nl_err_est}
\begin{align*}
\tau d_h^D(\Pi_h^m u,e_h^m ) &\le 
C\tau \left(\varepsilon +\|\bb\|_{\infty}h+\|c\|_{\infty}h^2 \right)
\;\left|\Pi_h^m  u\right|_{1}^2
+\frac\tau2 d_h^D(e_h^m ,e_h^m )\\
\tau d_h^M(\Pi_h u,e_h^m ) &\leq C\tau  h^2 \left|\Pi_h u_t(t_m )\right|^2_1
+\tilde{C}\tau \|e_h^m \|_0^2.
\end{align*}
For the first term on the right-hand side of Eq.~\eqref{eq:lin_er_es_t1} we note that it can be written in an edge formulation similar to Eq.~\eqref{eq:d_h_edge_formulation}. So we get,
\begin{eqnarray*}
\lefteqn{\left(f^{*(m -1)},e_h^m \right)}\nonumber \\
& = &\sum_{i,j=1}^N\alpha_{ij}\left[ (\tau  d_{ij}-m_{ij})(\Pi_h^{m -1/2}
\nu_j-\Pi_h^{m-1/2}\nu_i)\right] e_{hi}^m \nonumber \\
&& +\sum_{i,j=1}^N\alpha_{ij}d_{ij}(\Pi_h^{m -1}u_{hj}-\Pi_h^{m -1}u_{hi})e_{hi}^m \nonumber \\
& = &\sum_{E\in \mathcal{E}_h}\alpha_E h_E|\tau d_E-m_E|\left(\nabla \Pi_h^{m -1/2}\nu\cdot 
t_E,\nabla e_h^m \cdot t_E\right)\nonumber \\
&& +\sum_{E\in \mathcal{E}_h}\alpha_Eh_E|d_E|\left(\nabla \Pi_h^{m -1}u_h\cdot t_E,\nabla e_h^m \cdot t_E\right).
\end{eqnarray*}
Using the triangle inequality, $\alpha_E\leq 1$, Cauchy-Schwarz inequality, Young's inequality, and lemma \ref{lem:estim-dhd}, it follows 
that
\begin{eqnarray*}
\lefteqn{\left(f^{*(m -1)},e_h^m \right)}\nonumber \\
&\leq & \sum_{E\in\mathcal{E}_h}(\tau |d_E|+|m_E|)h_E|\Pi_h^{m -1/2}\nu\cdot t_E|_{1,E}
|e_h^m \cdot t_E|_{1,E}\nonumber \\
&& +C(\varepsilon+\|\bb\|_{\infty}h
+\|c\|_{\infty}h^2)|\Pi_h^{m -1}u_h|_{1}^2 +\frac{1}{4}\underline{d}_h^D(e_h^m ,e_h^m ), \nonumber \\
&\leq &\sum_{E\in \mathcal{E}_h}\tau ^2 |d_E|h_E|\Pi_h^{m -1/2}\nu\cdot t_E|_{1,E}^2+
\sum_{E\in \mathcal{E}_h}\frac 14|d_E|h_E|e_h^m \cdot t_E|_{1,E}^2\nonumber \\
&& +C\sum_{E\in \mathcal{E}_h}|m_E|h_E|\Pi_h^{m -1/2}\nu\cdot t_E|_{1,E}^2 +\frac14\sum_{E\in \mathcal{E}_h}|m_E|h_E|e_h^m \cdot t_E|_{1,E}^2\nonumber \\
&& +C(\varepsilon+\|\bb\|_{\infty}h+\|c\|_{\infty}h^2)|\Pi_h^{m -1}
u_h|_{1}^2+\frac{1}{4}\underline{d}_h^D(e_h^m ,e_h^m )
\end{eqnarray*}
Note that the second and the last term are the same, so adding them and using lemma \ref{lem:estim-dhd} for the first term and lemma \ref{lem:estim-dhm} for the third and fourth term, we finally get
\begin{align*}
\left(f^{*(m -1)},e_h^m \right)&\leq C\left( \tau ^2(\varepsilon+\|\bb\|_{\infty}h+\|c\|_{\infty}h^2)
+h^2\right)|\Pi_h^{m -1/2}\nu|_1^2
+\frac{C}{4}\|e_h^m \|_0^2
\\&\qquad+C(\varepsilon+\|\bb\|_{\infty}h+\|c\|_{\infty}h^2)|
\Pi_h^{m -1}u_h|_{1}^2
+\frac{1}{2}\underline{d}_h^D(e_h^m ,e_h^m ).
\end{align*}
Collecting all the estimates in Eq.~\eqref{eq:lin_er_es_t1}, absorbing the similar terms to the left-hand side, using $e_h^0=0$, the statement of the theorem follows by applying the triangular inequality and the interpolation error estimates.
\end{proof}

\begin{remark}
We observe that both in Theorem~\ref{thm:nl_err_est} and Theorem~\ref{thm:existence_linear}, we have $\mathcal{O}(\tau)$ convergence for the time-discretization and $\mathcal{O}(h^{0.5})$ in the space discretization for the FCT norm in the convection-dominated regime. It has been noted in \cite{BJK16} that for shock capturing methods such as the FEM-FCT schemes $\mathcal{O}(h^{0.5})$ convergence is expected.
\end{remark}

\section{Numerical Simulation}\label{sec:numerics}
\input{numerics}
  
\section{Summary}\label{sec:summary}
In this paper, we presented the first finite element error analysis for the flux corrected transport techniques introduced in \cite{Ku09} in the natural norm of the system referred as the FCT norm.  The Galerkin FEM was used for the space discretization coupled with the backward Euler time discretization. Numerical studies are presented in two dimensions for three different type of grids. The main finding of the analysis and the numerical simulations are given below:

\begin{enumerate}
\item We proved conditional stability of the FEM-FCT algorithm with the restriction in time step coming from the predictor step of forward Euler (see Eq.~\eqref{eq:assumption_space_time}) for both the linear and the nonlinear schemes.
\item For the analysis of the linear FEM-FCT algorithm, there was no restriction on the choice of the limiter, and only general assumptions, i.e., $\alpha_{ij}\in [0,1]$ and Eq.~\eqref{eq:limiter_symmetry} are considered.
\item For the nonlinear FEM-FCT algorithm an additional assumption on the limiter is required, i.e., Eq.~\eqref{eq:assumption_limiter}. This assumption is strong but is only required in the convection-dominated regime.
\item From the analysis, one expects $\mathcal{O}(\tau)$ convergence in the FCT norm for time-dominated problems and $\mathcal{O}(h^{0.5})$ convergence for the convection-dominated problems.
\item We obtained the predicted order of convergence in the FCT norm for constant limiter on a particular grid. For the other grids optimal order was obtained for the $L^2$, $H^1$, and the FCT norm. For the FCT norm the experimental order of convergence is better than the obtained order of convergence for grid~1 and grid~2. This is expected as we assumed the general properties of the limiters for the analysis.
\end{enumerate}
In the future, it would be interesting to use a general theta scheme instead of the backward Euler discretization, namely the Crank-Nicolson scheme. For our analysis, we assumed the limiter's general properties and would like to investigate a deeper analysis for the limiter's specific choices, e.g., \cite{Zal79, Ku20}. Further a deeper analysis of the constants appearing in the stability estimate to obtain sharper bounds would be beneficial. Lastly, the efficient solution of the nonlinear problem for the FEM-FCT algorithm remains an open question, and one should investigate sophisticated solvers and parallel techniques for obtaining the solution. For more open questions in stabilized techniques for Eq.~\eqref{eq:time_cdr_eqn}, we refer to \cite{JKN18}.
\bibliographystyle{alpha}
\bibliography{error_fem_fct}
\end{document}